\newtheorem{Th}{Theorem}
\newtheorem{Lem}{Lemma}
\begin{document}

\thispagestyle{empty}

\title[Nonlinear resonance in systems with noise]{Nonlinear resonance in systems with decaying perturbations and noise}

\author[O.A. Sultanov]{Oskar A. Sultanov}

\address{
Chebyshev Laboratory, St. Petersburg State University, 14th Line V.O., 29, Saint Petersburg 199178 Russia;\\
Institute of Mathematics, Ufa Federal Research Centre, Russian Academy of Sciences, Chernyshevsky street, 112, Ufa 450008 Russia.}
\email{oasultanov@gmail.com}


\maketitle

{\small
\begin{quote}
\noindent{\bf Abstract.} 
The effect of multiplicative white noise on the resonance capture in non-isochronous systems with time-decaying pumping is investigated. It is assumed that the intensity of perturbations decays with time, and its frequency is asymptotically constant. The occurrence of attractive solutions with an amplitude close to the resonant value and a phase synchronized with the excitation are considered. The persistence of such a regime in a stochastically perturbed system is analyzed. By combining the averaging method and the construction of suitable stochastic Lyapunov functions, conditions are derived that guarantee the stochastic stability of the resonant modes on infinite or asymptotically large time intervals. The proposed theory is applied to the Duffing oscillator with decaying parametric excitation and noise.

 \medskip

\noindent{\bf Keywords: }{Nonlinear oscillator, decaying perturbation, nonlinear resonance, phase locking, averaging, stochastic stability}

\medskip
\noindent{\bf Mathematics Subject Classification: }{34F15, 34C15, 34E10, 34C29}

\end{quote}
}
{\small

\section*{Introduction}
Resonance is a phenomenon that results in a significant increase in the energy of a system due to oscillatory perturbations. The highest amplitude is achieved when the frequency of perturbations is close to the system's  natural frequency. Such phenomena play a crucial role in various fields, including micro- and nano-electronics, particle accelerators, and celestial mechanics (see, for example,~\cite{RS16,MKSS18}). Note that in non-isochronous systems, the natural frequency varies with the amplitude. In this case, the resonant influence of perturbations with constant frequency are expressed as oscillations with the corresponding resonant amplitude values. Such dynamics in nonlinear systems with small perturbations is usually associated with the nonlinear resonance and is considered to be well studied~\cite{BVC79,SUZ88,MAD98}. The persistent amplification of the amplitude can occur when the frequency of the driving changes continuously with time. Such effects in systems with small chirped frequency perturbations are related to the autoresonance phenomenon and have been studied in many papers~\cite{LF09,LK08,OK19,AK20}. In this paper, the presence of a small parameter is not assumed. We consider time-decaying oscillatory perturbations with asymptotically constant frequency and study the occurrence of nonlinear resonance in the presence of white noise disturbances. In this case, the perturbed stochastic equations have the form of asymptotically autonomous system.

Asymptotically autonomous systems arise, for example, in the study of various multidimensional problems by reducing the dimensionality of models~\cite{CCT95,DS22}, in the asymptotic integration of strongly nonlinear nonautonomous systems~\cite{BG08,KF13}, in problems with time-dependent damping~\cite{SFR19,JM23} and vanishing control~\cite{AT18}.

Qualitative properties of deterministic asymptotically autonomous systems have been studied in many papers. Note that in some cases, decaying perturbations may not affect the autonomous dynamics~\cite{RB53,LM56,LDP74}. However, in general, the asymptotic and qualitative properties of solutions to perturbed and unperturbed systems can differ significantly (see, for example,~\cite{HT94}). Bifurcations in such systems have been discussed in~\cite{LRS02,KS05,MR08,OS22Non}. This paper discuss the influence of damped oscillatory perturbations with an asymptotically constant frequency on nonlinear systems in the plane. Note that such perturbations has been considered in several papers. In particular, asymptotic integration problems were discussed for linear systems in~\cite{HL75,MP85,PN07,BN10}. Bifurcation phenomena in nonlinear systems related to the stability of the equilibrium were discussed in~\cite{OS21DCDS,OS21JMS}. Nonlinear resonance effects and the emergence of near-periodic attractive states with resonant amplitude values were studied in~\cite{OS25DCDS}. In this paper, we study the stability of such resonant regimes with respect to multiplicative stochastic perturbations of white noise type. 
 
 It is well known that even weak stochastic perturbations can lead the trajectories of systems to exit from any bounded domain~\cite{FW98}. Stochastic perturbations of scalar autonomous systems with decaying intensity  were considered in~\cite{AGR09,ACR11,KT13}. Bifurcations near the equilibrium and a description of various asymptotic regimes for solutions of asymptotically autonomous Hamiltonian systems with noise were discussed in~\cite{OS22IJBC,OS23SIAM}. The combined effect of multiplicative noise and decaying chirped frequency perturbations were studied in~\cite{OS25CNSNS}. However, the influence of stochastic disturbances on nonlinear resonance phenomena in systems subject to decaying driving with asymptotically constant frequency has not been studied earlier. This is the subject of the present paper. 
 
The paper is organized as follows. Section~\ref{sec1} provides the statement of the problem and a motivating example. The formulation of the main results is presented in Section~\ref{sec2}, while the justification is contained in subsequent sections. In Section~\ref{sec3}, we construct an averaging transformation that simplify the perturbed system in the first asymptotic terms at infinity in time. Section~\ref{sec4} analyzes the deterministic truncated system, obtained from the averaged system by dropping the diffusion coefficients, and describes the conditions for phase locking and phase drifting regimes. Section~\ref{sec5} discusses the persistence of the phase locking regime in the full system by constructing stochastic Lyapunov functions. In Section~\ref{sex}, the proposed theory is applied to examples of asymptotically autonomous systems. The paper concludes with a brief discussion of the results obtained.

\section{Problem statement}\label{sec1}
 Consider a system of It\^{o} stochastic differential equations 
\begin{gather}\label{PS}
d\begin{pmatrix}
r \\ \varphi
\end{pmatrix}=
\left({\bf a}_0(r)+{\bf a}(r,\varphi,S(t),t)\right)dt + \varepsilon {\bf A}(r,\varphi,S(t),t)\,d{\bf w}(t), \quad t\geq \tau_0>0,
\end{gather}
where ${\bf w}(t)\equiv (w_1(t),w_2(t))^T$ is a two dimensional Wiener process on a probability space $(\Omega,\mathcal F,\mathbb P)$, ${\bf A}(r,\varphi,S,t)\equiv \{\alpha_{i,j}(r,\varphi,S,t)\}_{2\times 2}$ is a $2\times 2$ matrix, ${\bf a}(r,\varphi,S,t)\equiv (a_1(r,\varphi,S,t),a_2(r,\varphi,S,t))^T$ and ${\bf a}_0(r)\equiv (0,\nu(r))^T$ are vector functions. A positive parameter $\varepsilon>0$ is used to control the intensity of the noise. The functions $\nu(r)>0$, $a_i(r,\varphi,S,t)$ and $\alpha_{i,j}(r,\varphi,S,t)$, defined for all $|r|\leq \mathcal R={\hbox{\rm const}}$, $(\varphi,S)\in\mathbb R^2$, $t\geq \tau_0$, are infinitely differentiable, $2\pi$-periodic with respect to $\varphi$ and $S$, and do not depend on $\omega\in\Omega$. The functions ${\bf a}(r,\varphi,S(t),t)$ and ${\bf A}(r,\varphi,S(t),t)$ correspond to perturbations of the autonomous system 
\begin{gather}
	\label{US}
		\frac{d\hat r}{dt}=0, \quad \frac{d\hat \varphi}{dt}=\nu(\hat r),
\end{gather}
describing non-isochronous oscillations on the plane $(x_1,x_2)=(\hat r\cos\hat \varphi,-\hat r\sin\hat \varphi)$ with a constant amplitude $\hat r(t)\equiv \varrho_0$, $|\varrho_0|< \mathcal R$ and a natural frequency $\nu(\varrho_0)$. In this case, the solutions $r(t)$ and $\varphi(t)$ of system \eqref{PS} play the role of the amplitude and the phase of the stochastically perturbed oscillations. 

It is assumed that the frequency of perturbations is asymptotically constant: $S'(t)\sim s_0$ as $t\to\infty$ with $s_0={\hbox{\rm const}}>0$, and the intensity of perturbations decays with time: for each fixed $r$ and $\varphi$
\begin{gather*}
{\bf a}(r,\varphi,S(t),t)\to 0, \quad {\bf A}(r,\varphi,S(t),t)\to 0, \quad t\to\infty.
\end{gather*}
In this case, the perturbed system \eqref{PS} is asymptotically autonomous, and the unperturbed system \eqref{US} coincides with the limiting system. It was shown in~\cite{OS25DCDS} that in the absence of a stochastic part of the perturbations ${\bf A}(r,\varphi,S,t)\equiv 0$, the system admits stable resonant solutions with the phase synchronized with the excitation and the amplitude tending to a non-zero resonant value. The goal of present paper is to take into account the effects of stochastic perturbations on the resonant dynamics.

Let us specify the class of perturbations. We assume that 
\begin{gather}\label{fgas}\begin{split}
&{\bf a}(r,\varphi,S,t)\sim  \sum_{k=n}^\infty   {\bf a}_k(r,\varphi,S) \mu^k(t), \\ 
&{\bf A}(r,\varphi,S,t)\sim  \sum_{k=p}^\infty  {\bf A}_k(r,\varphi,S) \mu^k(t), \quad 
t\to\infty,
\end{split}
\end{gather}
for all $|r|< \mathcal R$ and $(\varphi,S)\in\mathbb R^2$, where $n,p\in\mathbb Z_+=\{1,2,\dots\}$, $\mu(t)>0$ is strictly decreasing smooth function, defined for all $t\geq \tau_0$, and satisfies the following conditions:  
\begin{gather}\label{mucond}\begin{split}
&\mu(t)\to 0, \quad  \ell(t):=\frac{1}{\mu(t)}\frac{d\mu(t)}{dt} \to 0, \\ 
& \exists\, m\in\mathbb Z_+, \chi_m\leq 0: \quad  \frac{\ell(t)}{\mu^{m}(t)}\to \chi_m, \quad \frac{\mu^{m+1}(t)}{\ell(t)}\to 0, 
\end{split}
\end{gather}
as $t\to\infty$. Note that the function $t^{-1/q}$ with $q\in\mathbb Z_+$ is suitable for the role of $\mu(t)$. In this case, $\ell(t)\equiv -(1/q) t^{-1}$ and there exist $m=q$ and $\chi_m=-1/q$ such that the condition \eqref{mucond} holds. Another example is given by $\mu(t)\equiv t^{-1/q} \log t$ with $q\in\mathbb Z_+$. In this case, $\ell(t)\equiv - t^{-1}(1-q/\log t)/q$, $m=q$ and $\chi_m=0$.

The coefficients ${\bf a}_k(r,\varphi,S)\equiv (a_{1,k}(r,\varphi,S),a_{2,k}(r,\varphi,S))^T$ and ${\bf A}_k(r,\varphi,S)\equiv \{\alpha_{i,j,k}(r,\varphi,S)\}_{2\times 2}$ are $2\pi$-periodic with respect to $\varphi$ and $S$. 
The phase of perturbations is considered in the form
\begin{gather}\label{Sform}
S(t)=s_0 t+\sum_{k=1}^{d} s_k \int\limits_{t_0}^t \mu^k(\varsigma)\, d\varsigma,
\end{gather}
with some $d\in\mathbb Z_+$, where $s_k={\hbox{\rm const}}$, $s_0\neq 0$. Moreover, it is assumed that there exist $0<|r_0|< \mathcal R$ and coprime integers $\kappa,\varkappa\in\mathbb Z_+$ such that the resonant condition holds:
\begin{gather}\label{rc}
	\kappa s_0=\varkappa\nu(r_0), \quad \eta:=\nu'(r_0)\neq 0.
\end{gather}
The series in \eqref{fgas} are asymptotic as $t\to\infty$, and for all $N\geq 1$ the following estimates hold: 
\begin{align*}
 |{\bf a}(r,\varphi,S,t)-\sum_{k=n}^{n+N-1}  {\bf a}_k(r,\varphi,S) \mu^k(t)|=\mathcal O(\mu^{n+N}(t)), \\ 
 \| {\bf A}(r,\varphi,S,t)-\sum_{k=p}^{p+N-1}  {\bf A}_k(r,\varphi,S)\mu^k(t)\|=\mathcal O(\mu^{p+N}(t))
\end{align*} 
as $t\to\infty$ uniformly for all $|r|\leq \mathcal R$ and $(\varphi,S)\in\mathbb R^2$, where $\|\cdot\|$ is the operator norm coordinated with the norm $|\cdot|$ of $\mathbb R^2$

Consider the example
\begin{gather}\label{Ex0}
dx_1=x_2\, dt, \quad 
dx_2=\left(-x_1+\vartheta x_1^3+ t^{-\frac{n}{4}} \mathcal G(x_1,x_2,S(t)\right)\,dt+\varepsilon t^{-\frac{p}{4}} \mathcal B(S(t))\,dw_1(t),
\end{gather}
where 
\begin{gather*}
\mathcal G(x_1,x_2,S)\equiv  \mathcal P(S)x_1+ \mathcal Q(S)x_2, \quad S(t)\equiv \frac{3 t}{2}, \\
\mathcal P(S)\equiv \mathcal P_0+ \mathcal P_1 \sin S, \quad
\mathcal Q(S)\equiv \mathcal Q_0+ \mathcal Q_1 \sin S, \quad
\mathcal B(S)\equiv \mathcal B_0+ \mathcal B_1 \sin S,
\end{gather*}
with parameters $n,p\in\mathbb Z_+$, $\mathcal P_i$, $\mathcal Q_i$, $\mathcal B_i$, $\varepsilon\in\mathbb R$ and $\vartheta>0$. Let us show that in the new variables system \eqref{Ex0} takes the form \eqref{PS}. Consider the limiting system 
\begin{gather*}
\frac{d\hat x_1}{dt}=\hat x_2, \quad \frac{d\hat x_2}{dt}=- U'(\hat x_1), \quad U(x)\equiv \frac{x^2}{2}-\frac{\vartheta x^4}{4}.
\end{gather*} 
Note that the level lines $\{(x_1,x_2)\in\mathbb R^2: 2U(x_1)+x_2^2=r^2\}$ for all $0<|r|< (2\vartheta)^{-1/2}$ correspond to $T(r)$-periodic solutions
\begin{align}
\nonumber 
& \hat x_{1,0}(t,r)\equiv r\,{\hbox{\rm sn}}\left(\frac{t}{\sqrt{k_r^2+1}},k_r\right)\sqrt{k_r^2+1}, \\ 
\nonumber  & \hat x_{2,0}(t,r)\equiv r\,{\hbox{\rm cn}}\left(\frac{t}{\sqrt{k_r^2+1}},k_r\right){\hbox{\rm dn}}\left(\frac{t}{\sqrt{k_r^2+1}},k_r\right), \\
\label{omegaeq} 
& T(r)\equiv 4\mathcal K(k_r)\sqrt{k_r^2+1}, \quad 
\nu(r)\equiv \frac{2\pi}{T(r)},
\end{align}
where ${\hbox{\rm sn}}(u,k)$, ${\hbox{\rm cn}}(u,k)$, ${\hbox{\rm dn}}(u,k)$ are the Jacoby elliptic functions, $\mathcal K(k)$ is the complete elliptic integral of the first kind, and $k_r\in (0,1)$ is a root of the equation $(k_r+k_r^{-1})^{-2}=\vartheta  r^2/2$. Define auxiliary $2\pi$-periodic functions 
\begin{gather}\label{tr1}
X_1(\varphi,r)\equiv \hat x_{1,0} \left(\frac{\varphi}{\nu(r)},r\right), \quad
X_2(\varphi,r)\equiv \hat x_{2,0} \left(\frac{\varphi}{\nu(r)},r\right).
\end{gather}
It can easily be checked that 
\begin{gather*}
\nu(r)\partial_\varphi X_1=X_2, \quad \nu(r)\partial_\varphi X_2=-U(X_1),\\ 
2U(X_1)+X_2^2\equiv r^2, \quad
 {\hbox{\rm det}}\frac{\partial (X_1,X_2)}{\partial (\varphi,r)}\equiv \begin{vmatrix} \partial_\varphi X_1 & \partial_\varphi X_2\\ \partial_r X_1 & \partial_r X_2\end{vmatrix} \equiv \frac{r}{\nu(r)}.
\end{gather*}
Hence, the transformation \eqref{tr1} is invertible for all $0<|r|<(2\vartheta)^{-1/2}$ and $\varphi\in[0,2\pi)$. Then, by applying It\^{o}'s formula~\cite[\S 4.2]{BO98}, it can be shown that in the variables $(r,\varphi)$ system \eqref{Ex0}  takes the form \eqref{PS} with $\mu(t)\equiv t^{-1/4}$, $s_0=3/2$, $s_i=0$, 
\begin{gather}\label{aAEx0} 
\begin{split}
 a_i(r,\varphi,S,t)&\equiv t^{-\frac{n}{4}}a_{i,n}(r,\varphi,S)+ t^{-\frac{p}{2}} a_{i,2p}^\varepsilon(r,\varphi,S), \quad i\in\{1,2\}, \\
  \alpha_{i,j}(r,\varphi,S,t)&\equiv t^{-\frac{p}{4}}\alpha_{i,j,p}(r,\varphi,S),  \quad i,j\in\{1,2\},
\end{split}
\end{gather}
where
\begin{align*}
 a_{1,n}(r,\varphi,S)&\equiv \frac{X_2(\varphi,r)}{r}\left(\mathcal Q(S)X_1(\varphi,r)+\mathcal P(S)X_2(\varphi,r)\right), \\ 
a_{1,2p}^\varepsilon(r,\varphi,S)&\equiv   \frac{\varepsilon^2 \mathcal B^2(S) U(X_1(\varphi,r))}{  r^3}, \\
 a_{2,n}(r,\varphi,S)&\equiv - \frac{\nu(r)\partial_r X_1(\varphi,r)}{r}\left(\mathcal Q(S)X_1(\varphi,r)+\mathcal P(S)X_2(\varphi,r)\right), \\
a_{2,2p}^\varepsilon(r,\varphi,S)&\equiv  -  \frac{\varepsilon^2 \mathcal B^2(S) \nu(r)}{2 r}  \partial_r\left(\frac{\partial_rX_1(\varphi,r) X_2(\varphi,r)}{r}\right), \\
\alpha_{1,1,p}(r,\varphi,S)&\equiv  \frac{\mathcal B(S)X_2(\varphi,r)}{r}, \\ 
\alpha_{2,1,p}(r,\varphi,S)&\equiv  - \frac{\mathcal B(S)\nu(r)\partial_r X_1(\varphi,r)}{r},
\end{align*}
and $\alpha_{1,2,p}(r,\varphi,S)\equiv \alpha_{2,2,p}(r,\varphi,S)\equiv 0$. 
Note that $0<\nu(r)<1$ for all $0<|r|<(2\vartheta)^{-1/2}$. Hence, there exist $\kappa$, $\varkappa\in\mathbb Z_+$ and $0<|r_0|<(2\vartheta)^{-1/2}$ such that the condition \eqref{rc} holds. 

If $\mathcal G(x,y,S)\equiv0$ and $\varepsilon=0$, then $r(t)\equiv \varrho_0$ and $\varphi(t)\equiv \nu(\varrho_0)t+\phi_0$ with arbitrary constants $\varrho_0$ and $\phi_0$. 
Numerical analysis of the system with $\mathcal G(x,y,S)\not\equiv0$ and $\varepsilon=0$ shows that, depending on the values of the perturbation parameters, two cases are possible. 
In the first case, the amplitude of the solutions either tends to zero or grows until it reaches the boundary of the considered domain, depending on the sign of $\mathcal Q_0$ (see~Fig.~\ref{FigEx0}, a). In the second case, the solutions with an asymptotically constant amplitude $r(t)\approx r_0$ may appear (see~Fig.~\ref{FigEx0}, b). The emergence of such solutions is associated with the capture of the system into resonance. If $\varepsilon\neq 0$, the stochastic perturbations may disrupt such a behaviour (see~Fig.~\ref{FigEx0}, c). 

Thus, the goal of paper is to describe the conditions that guarantee the existence and stability of the resonant regimes in the perturbed stochastic systems of the form \eqref{PS} with perturbations satisfying \eqref{fgas},\eqref{mucond} and \eqref{Sform}.
\begin{figure}
\centering
\subfigure[$\mathcal P_1=0.4$, $\varepsilon=0$]{
\includegraphics[width=0.3\linewidth]{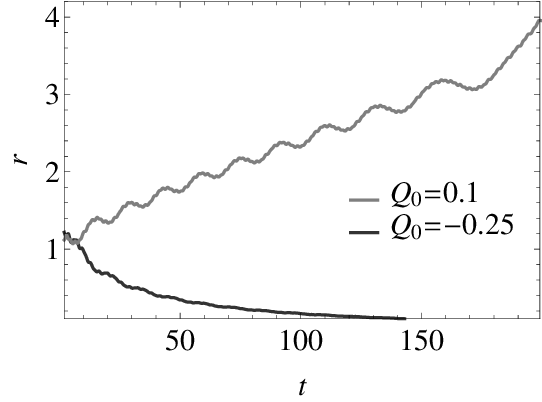}
}
\hspace{1ex}
 \subfigure[$\mathcal P_1=1$, $\varepsilon=0$]{
 \includegraphics[width=0.3\linewidth]{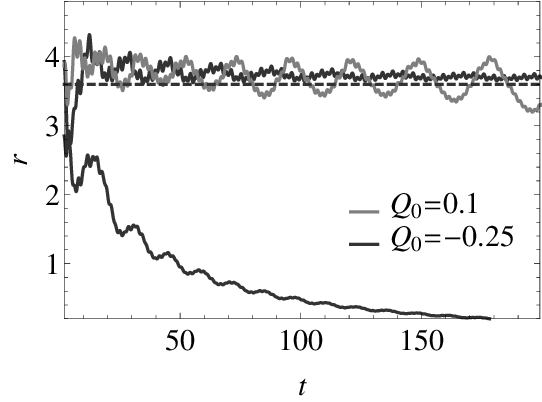}
}
\hspace{1ex}
\subfigure[$\mathcal P_1=1$, $\varepsilon=0.1$]{
 \includegraphics[width=0.3\linewidth]{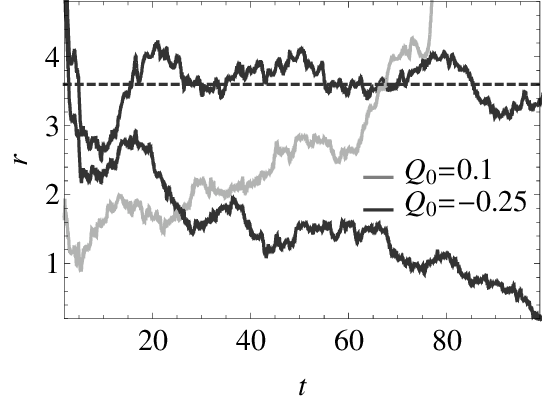}
}
\caption{\small 
The evolution of $r(t)\equiv \sqrt{2U(x(t))+y^2(t)}$ for sample paths of solutions to system \eqref{Ex0} with $s_0=3/2$, $n=2$, $p=1$, $\vartheta=2^{-5}$, $\mathcal P_0=\mathcal  Q_1=0$, $\mathcal B_0=3.6$, $\mathcal B_1=0$
and different values of the parameters $\mathcal Q_0$, $\mathcal P_1$, $\varepsilon$ and initial data. The dashed curves correspond to $r(t)\equiv 3.6$.} \label{FigEx0}
\end{figure}

\section{Main results}
\label{sec2}

Define the domain
\begin{align*}
&\mathfrak D_{\epsilon,\tau}:=\left\{(R,\Psi)\in\mathbb R^2: \quad \left|R+r_0  \mu^{-\frac{1}{2}}(\tau) \right|\leq \mathcal R \mu^{-\frac{1}{2}}(\tau)-\epsilon\right \}
\end{align*} 
with some $\epsilon\in [0, \mathcal R)$ and $\tau\geq \tau_0$. Denote by $\langle F(S)\rangle_{\varkappa S}$ the averaging of any function $F(S)$ over $S$ for the period $2\pi\varkappa$,
\begin{gather*}
\langle F(S)\rangle_{\varkappa S}\equiv \frac{1}{2\pi\varkappa}\int\limits_0^{2\pi\varkappa} F(S)\,dS.
\end{gather*}
Then, we have the following:
\begin{Th}\label{Th1}
Let system \eqref{PS} satisfy \eqref{fgas}, \eqref{Sform} and \eqref{rc} with $m\geq n$. Then, for all $2n-1\leq N\leq 2m$ and $\epsilon\in(0,\mathcal R)$ there exist $t_0\geq \tau_0$ and the transformations $(r,\varphi)\mapsto (R,\Psi)\mapsto (\rho,\psi)$,
\begin{gather}
\label{ch1} r(t)=r_0+R(t) \sqrt{\mu(t)} , \quad \varphi(t)=\frac{\kappa}{\varkappa}S(t)+\Psi(t), \\
\label{ch2}
 R(t)=\rho(t)+\tilde u_N(\rho(t),\psi(t),t), \quad \Psi(t)=\psi(t)+\tilde v_N(\rho(t),\psi(t),t),
\end{gather}
where $\tilde u_N(\rho,\psi,t)=\mathcal O(\mu^{\frac{1}{2}}(t))$, $\tilde v_N(\rho,\psi,t)=\mathcal O(\mu^{\frac{1}{2}}(t))$ as $t\to\infty$ uniformly for all $|\rho|<\infty$, $\psi\in\mathbb R$ and 
\begin{gather}\label{tildeest}
|\tilde u_N(\rho,\psi,t)|\leq \epsilon, \quad |\tilde v_N(\rho,\psi,t)|\leq \epsilon, \quad (\rho,\psi)\in\mathfrak D_{\epsilon,t_0}, \quad t\geq t_0,
\end{gather}
such that for all $0<|r|< \mathcal R$ and $\varphi\in\mathbb R$ system \eqref{PS} can be transformed into
\begin{gather}\label{rhopsi}
d\begin{pmatrix}
\rho \\ \psi
\end{pmatrix} = \begin{pmatrix} \Lambda(\rho,\psi,S(t),t) \\ 
\Omega(\rho,\psi,S(t),t)\end{pmatrix}dt+\varepsilon {\bf C}(\rho,\psi,S(t),t)\, d{\bf w}(t),
\end{gather}
with 
\begin{gather*}
\Lambda(\rho,\psi,S,t)  \equiv \hat\Lambda_N(\rho,\psi,t)+ \tilde\Lambda_N(\rho,\psi,S,t),\\ 
\Omega(\rho,\psi,S,t) \equiv \hat\Omega_N(\rho,\psi,t)+\tilde\Omega_N(\rho,\psi,S,t),
\end{gather*} 
and ${\bf C}(\rho,\psi,S,t)\equiv \{\sigma_{i,j}(\rho,\psi,S,t)\}_{2\times 2}$, defined for all $(\rho,\psi)\in\mathfrak D_{\epsilon,t_0}$, $t\geq t_0$ and $S\in\mathbb R$, such that
\begin{gather}
\nonumber
\hat\Lambda_N(\rho,\psi,t)\equiv \sum_{k=2n-1}^N \Lambda_k(\rho,\psi) \mu^{\frac{k}{2}}(t)-\frac{\ell(t)\rho}{2}, \quad 
\hat\Omega_N(\rho,\psi,t)\equiv \sum_{k=1}^N \Omega_k(\rho,\psi) \mu^\frac{k}{2}(t),\\
\label{tildeLO}\tilde\Lambda_N(\rho,\psi,S,t)=\mathcal O\left(\mu^{\frac{N+1}{2}}(t)\right), \quad
\tilde\Omega_N(\rho,\psi,S,t)=\mathcal O\left(\mu^{\frac{N+1}{2}}(t)\right), \\
\label{sigmaij}
\sigma_{1,j}(\rho,\psi,S,t)=\mathcal O\left(\mu^{\frac{2p-1}{2}}(t)\right), \quad
\sigma_{2,j}(\rho,\psi,S,t)=\mathcal O\left(\mu^{p}(t)\right)
\end{gather}
as $t\to\infty$ uniformly for all $|\rho|<\infty$ and $(\psi,S)\in\mathbb R^2$, where $\Lambda_k(\rho,\psi)$ and $\Omega_k(\rho,\psi)$ are $2\pi$-periodic in $\psi$, and are polynomials in $\rho$ of degree $k-1$ and $k$, respectively. 
In particular, $\Lambda_{2n-1}(\rho,\psi)\equiv \lambda(\psi):= \langle a_{1,n}(r_0,\kappa S/\varkappa+\psi,S)\rangle_{\varkappa S}$, $\partial_\psi\Omega_k(\rho,\psi)\equiv 0$ for $k\leq 2n-1$, and $\Omega_1(\rho,\psi)\equiv \eta\rho$.
\end{Th}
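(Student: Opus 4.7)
\emph{Plan of proof.} My strategy is to carry out the two changes of variables \eqref{ch1}--\eqref{ch2} in turn, treating the first as an explicit resonant scaling and the second as a near-identity averaging transformation constructed order by order in $\mu^{1/2}(t)$.

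The first step is to substitute \eqref{ch1} into \eqref{PS}. Since the scaling factor $\mu^{-1/2}(t)$ and the phase detuning $-(\kappa/\varkappa)S(t)$ are deterministic, It\^o's formula reduces to the ordinary chain rule here and yields
\begin{gather*}
dR=\mu^{-1/2}(t)\,a_1\,dt-\tfrac{1}{2}\ell(t)R\,dt+\varepsilon\mu^{-1/2}(t)(\alpha_{1,1}\,dw_1+\alpha_{1,2}\,dw_2),\\
d\Psi=\bigl[\nu(r_0+R\sqrt{\mu(t)})-(\kappa/\varkappa)S'(t)+a_2\bigr]\,dt+\varepsilon(\alpha_{2,1}\,dw_1+\alpha_{2,2}\,dw_2),
\end{gather*}
with $r=r_0+R\sqrt{\mu(t)}$ and $\varphi=(\kappa/\varkappa)S(t)+\Psi$. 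Expanding $\nu(r_0+R\sqrt\mu)=\nu(r_0)+\eta R\sqrt\mu+\mathcal{O}(\mu)$ and using $\kappa s_0=\varkappa\nu(r_0)$ together with \eqref{Sform} cancels the $\mathcal{O}(1)$ drift in $\Psi$ and leaves $\eta R\sqrt\mu$ as the first surviving term, which will become $\Omega_1(\rho,\psi)=\eta\rho$. Taylor-expanding all coefficients from \eqref{fgas} around $r_0$ in powers of $R\sqrt\mu$ casts the $(R,\Psi)$-system as an asymptotic series in $\mu^{1/2}(t)$ with coefficients $2\pi$-periodic in $\Psi$ and $2\pi\varkappa$-periodic in $S$ (since $\varphi=(\kappa/\varkappa)S+\Psi$ is $2\pi\varkappa$-periodic in $S$), polynomial in $R$ of degree controlled by the Taylor order. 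The $\psi$-dependence enters only through $a_1,a_2$, that is from order $\mu^{n-1/2}$ in the $R$-equation and $\mu^n$ in the $\Psi$-equation, which is what yields $\partial_\psi\Omega_k\equiv 0$ for $k\leq 2n-1$.

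Next I would construct the near-identity transformation \eqref{ch2} by a Krylov--Bogolyubov-type iteration. Writing $\tilde u_N=\sum_{k=1}^{N}u_k(\rho,\psi,S,t)\mu^{k/2}(t)$ and similarly for $\tilde v_N$, applying It\^o's formula to express $(\rho,\psi)$ in terms of $(R,\Psi)$, and matching coefficients of $\mu^{k/2}(t)$ gives at each order a homological equation of the schematic form
\begin{gather*}
s_0\,\partial_S u_k(\rho,\psi,S)=F_k(\rho,\psi,S)-\langle F_k\rangle_{\varkappa S},
\end{gather*}
with $F_k$ a $2\pi\varkappa$-periodic polynomial in $\rho$ built from previously determined data, and analogously for $v_k$. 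Since $s_0\neq 0$ these are solvable by quadrature in $S$, and the residuals $\langle F_k\rangle_{\varkappa S}$ become $\Lambda_k,\Omega_k$. At the first nontrivial order $k=2n-1$ the residual is exactly $\langle a_{1,n}(r_0,(\kappa/\varkappa)S+\psi,S)\rangle_{\varkappa S}=\lambda(\psi)$, hence $\Lambda_{2n-1}\equiv\lambda(\psi)$. The polynomial-degree claim follows inductively, since each Taylor step adds a factor $\rho$ together with $\mu^{1/2}$. Invertibility of the near-identity map $(\rho,\psi)\mapsto(R,\Psi)$ on $\mathfrak D_{\epsilon,t_0}$ follows from the implicit function theorem once $t_0$ is large enough that $|\tilde u_N|+|\tilde v_N|<1$. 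The It\^o second-order corrections produced by this transformation contribute to the drift only at order $\mu^{2p-1}(t)$ and are absorbed either as fresh $\Lambda_k,\Omega_k$ at the appropriate level or into $\tilde\Lambda_N,\tilde\Omega_N$. The diffusion bounds \eqref{sigmaij} follow from the chain rule: the first row inherits a factor $\mu^{-1/2}$ from the rescaling of $R$, giving $\mathcal{O}(\mu^{p-1/2})$, while the second row carries only the original $\mathcal{O}(\mu^p)$.

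Finally, the estimates \eqref{tildeest} and \eqref{tildeLO} are verified by choosing $t_0$ large enough that the sums $\sum|u_k|\mu^{k/2}(t)$ and $\sum|v_k|\mu^{k/2}(t)$ are bounded by $\epsilon$ on $\mathfrak D_{\epsilon,t_0}$, and by collecting truncated terms of order $\mu^{(N+1)/2}(t)$ into the remainders. The main obstacle I anticipate is the bookkeeping of the It\^o corrections at each order: the $\partial^2_\rho$-type terms from the near-identity transformation mix powers of $\mu$ arising from the noise of order $\mu^p$, the scaling factor $\mu^{-1/2}$ and the expansion of $\nu(r)$, and one must verify that none of these violates either the polynomial-degree claim or the asymptotic orders in \eqref{tildeLO}--\eqref{sigmaij}. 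A secondary technical point is that the hypothesis $m\geq n$ combined with $2n-1\leq N\leq 2m$ is exactly what keeps the deterministic term $-\ell(t)\rho/2$, which enters at order $\mu^m(t)$, retained in $\hat\Lambda_N$ rather than being swept into the remainder $\tilde\Lambda_N$.
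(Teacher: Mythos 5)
Your proposal follows essentially the same route as the paper: the resonant scaling \eqref{ch1} followed by a near-identity averaging transformation whose coefficients solve homological equations $s_0\partial_S u_k = F_k - \langle F_k\rangle_{\varkappa S}$, with the averages becoming $\Lambda_k,\Omega_k$ and the diffusion matrix transported by the Jacobian of the transformation. The only slip is minor bookkeeping: the It\^{o} second-order corrections enter the drift at order $\mu^{(4p-1)/2}(t)$ (two factors of $\mu^{(2p-1)/2}$ from the diffusion coefficients plus at least $\mu^{1/2}$ from the Hessian of $u_l$, $l\geq 1$), not $\mu^{2p-1}(t)$, but this does not affect the validity of the argument.
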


The proof is contained in Section~\ref{sec3}.

It can easily be checked that if $(\rho(t),\psi(t))\in\mathfrak D_{\epsilon,t_0}$ for all $t\geq t_0$, then, combining \eqref{ch1}, \eqref{ch2}, \eqref{tildeest} and the inequality $\mu(t)\leq \mu(t_0)$ as $t\geq t_0$, we obtain $|r(t)|\leq \mathcal R$ for all $t\geq t_0$. Thus, the solutions of system \eqref{rhopsi} with bounded $\rho(t)$ and $\psi(t)$ are associated with the resonant solutions of system \eqref{PS} such that $r(t)\approx r_0$ and $\varphi(t)\approx \kappa S(t)/\varkappa$.

Note that the transformation described in Theorem~\ref{Th1} averages only the drift terms of equations in the the leading asymptotic terms as $t\to\infty$, the diffusion coefficients are not simplified. The proposed method is based on the study of simplified averaged dynamics and the justification of its stochastic stability in the complete system.

Consider the truncated deterministic system of ordinary differential equations
\begin{gather}\label{trsys}
\frac{d\rho}{dt}=\Lambda(\rho,\psi,t), \quad \frac{d\psi}{dt}=\Omega(\rho,\psi,t)
\end{gather}
obtained from \eqref{rhopsi} by dropping the diffusion coefficients. The dynamics described by system \eqref{trsys} depends on the properties of the limiting system
\begin{gather}\label{limsys}
\frac{d\hat \rho}{dt} = \mu^{\frac{2n-1}{2}}(t)\lambda(\hat\psi), \quad \frac{d\hat \psi}{dt}=\mu^{\frac{1}{2}}(t) \eta \hat \rho.
\end{gather}
In particular, the presence and stability of the fixed points in system \eqref{limsys} play the important role. 
Consider two different cases:
\begin{align}
\label{aszero}
	\exists \, \psi_0\in\mathbb R: \quad & \lambda(\psi_0)=0, \quad \xi:=\partial_\psi \lambda(\psi_0)\neq 0; \\
\label{asnzero}  
	&\lambda(\psi)\neq 0 \quad \forall\, \psi\in\mathbb R.
\end{align}
In case of assumption \eqref{aszero}, it can easily be checked that the equilibrium $(0,\psi_0)$ of system \eqref{limsys} is of saddle type if  $\xi \eta>0$ and is of centre type if $\xi \eta<0$ (see Fig.~\ref{PhasePort}, a). In the first case, the discarded decaying terms of system \eqref{trsys} cannot qualitatively affect the dynamics~\cite{LM56}. However, in the latter case, the behavior of the trajectories depends significantly on the decaying terms of the equations. The condition $\xi=0$ corresponds to a center-saddle bifurcation in the limiting system. This degenerate case deserves special attention and is not considered in this paper. 

\begin{figure}
\centering
\subfigure[Case of \eqref{aszero}]{
\includegraphics[width=0.4\linewidth]{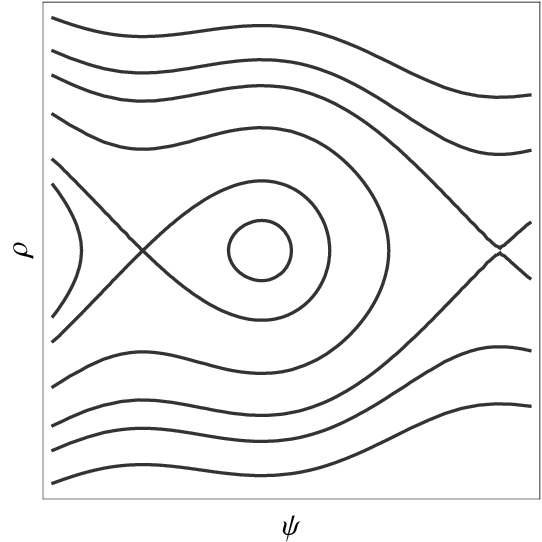}
}
\hspace{1ex}
 \subfigure[Case of \eqref{asnzero}]{
 \includegraphics[width=0.4\linewidth]{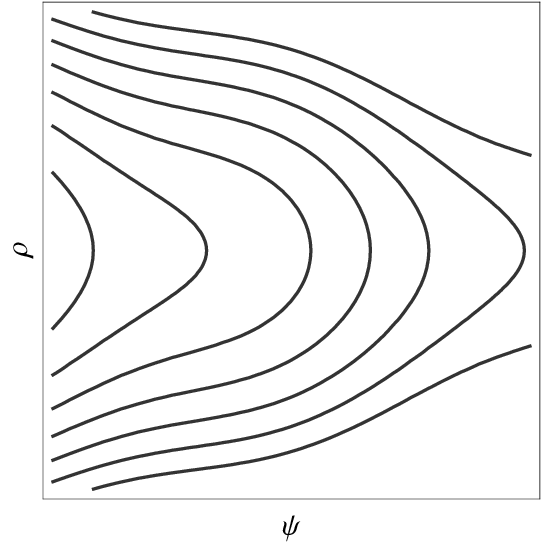}
}
\caption{\small A typical phase portrait of the limiting system \eqref{limsys} with $n=1$. } \label{PhasePort}
\end{figure}

Note that the model system \eqref{trsys} is hamiltonian in the leading asymptotic terms as $t\to\infty$. It follows from~\cite{OS22Non} that the qualitative properties of solutions in such asymptotically autonomous systems largely depend on non-Hamiltonian terms. With this in mind, we assume that
\begin{gather}\label{asst}
\begin{split}
\exists\,  h\in (1,N]:  \quad & \partial_\rho \Lambda_k(\rho,\psi)+\partial_\psi \Omega_k(\rho,\psi)\equiv 0, \quad k\leq h-1, \\
 & \gamma_h:= \partial_\rho \Lambda_h(0,\psi_0)+\partial_\psi \Omega_h(0,\psi_0)\neq 0.
\end{split}
\end{gather}
Note that the examples discussed in Section~\ref{sex} illustrate that this assumption is not too restrictive. The case when condition \eqref{asst} is not satisfied requires separate consideration and will be discussed elsewhere.

Define
\begin{gather}\label{zetah}
\zeta_h(t):=\int\limits_{t_0}^t \left(\mu(\varsigma)\right)^{\frac{h}{2}}\, d\varsigma, \quad \tilde \gamma_h:=\gamma_h-\delta_{h,2m} \frac{\chi_m n}{2}.
\end{gather}
Then, we have the following lemma.
\begin{Lem}\label{Lem2}
Let assumptions \eqref{aszero} and \eqref{asst} hold with $m\geq n$, $\xi\eta<0$ and $\tilde \gamma_h<0$. Then, system \eqref{trsys} has a stable particular solution $\rho_\ast(t)$, $\psi_\ast(t)$ such that
\begin{gather}\label{assol}
\rho_\ast(t)=\mathcal O(\mu^{z}(t)) , \quad \psi_\ast(t)=\psi_0+\mathcal O(\mu^z(t)), \quad t\to\infty
\end{gather}
with some $z\in (0,z_0)$, where
\begin{gather}\label{z0}
z_0=
	\begin{cases}
		\displaystyle \frac{1}{2}, & h<2m,\\
		\displaystyle \frac{1}{2}\min\left\{1,\frac{ \tilde \gamma_h }{ \chi_m } \right\}, & h=2m.
	\end{cases}
\end{gather}
  Moreover, the solution $\rho_\ast(t)$, $\psi_\ast(t)$ is asymptotically stable if $\zeta_h(t)\to\infty$ as $t\to\infty$.
\end{Lem}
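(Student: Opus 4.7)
The proof combines two ingredients: construction of the particular solution $(\rho_*,\psi_*)$ by matched asymptotics, and a Lyapunov stability argument adapted to the leading Hamiltonian structure of \eqref{limsys}. For the first ingredient, seek
\[
\rho_*(t)\sim\sum_{j\ge 1}r_j\mu^{j/2}(t),\qquad \psi_*(t)\sim\psi_0+\sum_{j\ge 1}q_j\mu^{j/2}(t),\qquad t\to\infty,
\]
substitute into \eqref{trsys} and match powers of $\mu^{1/2}(t)$. Since $\xi\neq 0$ and $\eta\neq 0$, each order produces a nondegenerate linear algebraic system for $(r_j,q_j)$, so the coefficients are determined recursively. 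The formal expansion is realized as a genuine solution by truncating at sufficiently high order and closing with a contraction argument in an appropriate weighted function space, or equivalently by trapping the trajectory in the invariant region produced by the Lyapunov function constructed below.

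To establish stability, pass to deviations $P=\rho-\rho_*(t)$, $Q=\psi-\psi_*(t)$. The linearization at the origin has eigenvalues $\pm i\sqrt{|\xi\eta|}\,\mu^{n/2}(t)$, purely imaginary thanks to $\xi\eta<0$. A conserved quadratic form for this leading-order linear flow is
\[
V_0(P,Q,t)=\tfrac12|\eta|P^2+\tfrac12|\xi|\mu^{n-1}(t)\,Q^2,
\]
in which the weight $\mu^{n-1}(t)$ is chosen to cancel the off-diagonal cross terms arising from the different asymptotic scales of the $\rho$- and $\psi$-equations of \eqref{limsys}. Define the full Lyapunov function $V=V_0+V_1$, where $V_1$ is a polynomial correction in $(P,Q)$ whose coefficients depend on $t$ only through $\mu^{1/2}(t)$, built iteratively (in the spirit of a Poincar\'e--Dulac / Krylov--Bogolyubov averaging) so as to annihilate the indefinite oscillatory contributions produced in $dV/dt$ by the non-leading terms $\Lambda_k,\Omega_k$ with $2n-1<k<h$. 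Under assumption \eqref{asst}, this recursive construction goes through up to the order $\mu^{h/2}(t)$.

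Computing $dV/dt$ along \eqref{trsys} then isolates three contributions at the leading rate: the divergence $\gamma_h=\partial_\rho\Lambda_h(0,\psi_0)+\partial_\psi\Omega_h(0,\psi_0)$ at the origin, which produces a term $\gamma_h\mu^{h/2}(t)\,V$; the damping term $-\ell(t)\rho/2$ in $\hat\Lambda_N$; and the time derivative of the weight $\mu^{n-1}(t)$ in $V_0$. The last two are each of order $\ell(t)\sim\chi_m\mu^m(t)$ and therefore enter the principal rate only in the critical case $h=2m$, where they combine with $\gamma_h$ into the effective coefficient $\tilde\gamma_h=\gamma_h-\chi_m n/2$ from \eqref{zetah}. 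In all cases one obtains, on a sufficiently small neighborhood of the origin and for $t$ large,
\[
\frac{dV}{dt}\le \tilde\gamma_h\,\mu^{h/2}(t)\,V\,\bigl(1+o(1)\bigr).
\]
Gronwall's inequality then gives $V(t)\le V(t_0)\exp\!\bigl(\tilde\gamma_h\zeta_h(t)(1+o(1))\bigr)$. Since $\tilde\gamma_h<0$, $V(t)$ stays bounded, yielding Lyapunov stability of $(\rho_*,\psi_*)$; if in addition $\zeta_h(t)\to\infty$, then $V(t)\to 0$, giving asymptotic stability. Unwinding the definition of $V$ and using the weight $\mu^{n-1}(t)$ in $V_0$ translates the bound back to $(\rho,\psi)$, producing \eqref{assol}, and the threshold $z_0$ in \eqref{z0} arises from balancing the Gronwall decay of $V$ against the $\mu^{n-1}(t)$ weight in the critical case $h=2m$.

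The main technical obstacle is the iterative construction of the correction $V_1$ together with the careful bookkeeping in the critical case $h=2m$, where the $-\chi_m n/2$ term in $\tilde\gamma_h$ emerges from the subtle interplay between the weight $\mu^{n-1}(t)$, the damping $-\ell(t)\rho/2$, and the divergence $\gamma_h$; away from this critical scaling the argument is essentially standard under the nondegeneracy assumptions \eqref{aszero}, \eqref{asst}, and $\xi\eta<0$.
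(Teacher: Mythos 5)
Your proposal follows essentially the same route as the paper's proof: a formal $\mu^{1/2}$-expansion about $(0,\psi_0)$ solved order by order using $\xi\eta\neq 0$, realized and stabilized by a Lyapunov function whose leading part is the anisotropically weighted quadratic form reflecting the $\mu^{(n-1)/2}$ scale separation between the two deviations, with the critical case $h=2m$ producing the shift $\gamma_h\mapsto\tilde\gamma_h$ through the $\ell(t)\sim\chi_m\mu^m(t)$ terms, and integration of the differential inequality giving stability (asymptotic if $\zeta_h\to\infty$). One point to tighten: your correction $V_1$ must also act at order $k=h$ itself, not only at orders $2n-1<k<h$ — the order-$h$ non-Hamiltonian contribution to $dV_0/dt$ is of the indefinite form $\partial_\rho\Lambda_h(0,\psi_0)\,|\eta|P^2+\partial_\psi\Omega_h(0,\psi_0)\,|\xi|\mu^{n-1}Q^2$, and only its rotational average (the trace $\gamma_h$) has a controlled sign; this is precisely what the cross term $\mu^{h/2}\tilde\gamma_{h,z}\,uv/2$ in the paper's function \eqref{LFn} accomplishes. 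Also, with your weight $\mu^{n-1}(t)$ placed on $Q^2$ rather than $\mu^{-(n-1)}(t)$ on $P^2$, the effective Gronwall rate at $h=2m$ is $\tilde\gamma_h+(n-1)\chi_m$ rather than $\tilde\gamma_h$; since $\chi_m\le 0$ this is still negative and unwinding the weight recovers the same bounds and the same $z_0$, but the bookkeeping as stated is slightly off.
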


In the case of assumption \eqref{asnzero}, it is easy to see that all trajectories of the limiting system \eqref{limsys} are unbounded (see Fig.~\ref{PhasePort}, b). The solutions of system \eqref{trsys} have a similar behavior. 

The proofs of Lemmas~\ref{Lem2} and \ref{Lem3} are contained in Section~\ref{sec4}.

\begin{Lem}\label{Lem3}
Let assumption \eqref{asnzero} hold with $m\geq n$ and $\zeta_{2n-1}(t)\to\infty$ as $t\to\infty$. Then, there exists $t_1\geq t_0$ such that $|\rho(t)|$ and $|\psi(t)|$ for solutions of system \eqref{trsys} with initial data $(\rho(t_1),\psi(t_1))\in\mathfrak D_{\epsilon,t_0}$ increase as $t>t_1$ until they reach the boundary of the domain $\mathfrak D_{\epsilon,t_0}$ in a finite time. 
\end{Lem}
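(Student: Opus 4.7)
The plan is to exploit assumption \eqref{asnzero} together with the definite sign of the leading $O(\mu^{(2n-1)/2})$ drift in $\Lambda$ to force $\rho(t)$ to grow monotonically and exit the strip $\mathfrak{D}_{\epsilon,t_0}$ in finite time; the second coordinate will then follow through its leading term $\mu^{1/2}(t)\eta\rho$. First I would observe that $\lambda(\psi)$ is continuous and $2\pi$-periodic in $\psi$, and never vanishes by \eqref{asnzero}, so by compactness of $[0,2\pi]$ it has constant sign on $\mathbb{R}$. Without loss of generality assume $\lambda(\psi)\geq\lambda_0>0$ for all $\psi$ (the opposite sign is symmetric).

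Next I would establish a uniform lower bound for $\Lambda$ on $\mathfrak{D}_{\epsilon,t_0}$. With $t_0$ fixed, $\mathfrak{D}_{\epsilon,t_0}$ is a bounded strip in $\rho$: $|\rho|\leq M_0:=(\mathcal{R}+|r_0|)\mu^{-1/2}(t_0)$. Using the decomposition of $\Lambda$ from Theorem~\ref{Th1}, write
\[
\Lambda(\rho,\psi,t)=\lambda(\psi)\,\mu^{(2n-1)/2}(t)+r_\Lambda(\rho,\psi,S(t),t),
\]
where $r_\Lambda$ collects $-\ell(t)\rho/2$, the polynomial tail $\sum_{k=2n}^{N}\Lambda_k(\rho,\psi)\mu^{k/2}(t)$, and $\tilde\Lambda_N$. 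On the bounded set $\mathfrak{D}_{\epsilon,t_0}$, each $|\Lambda_k(\rho,\psi)|$ with $k\geq 2n$ is bounded by a constant, so these terms contribute $O(\mu^n(t))$; the damping satisfies $|\ell(t)\rho/2|\leq CM_0\,\mu^m(t)=O(\mu^n(t))$ since $m\geq n$ and $\ell(t)/\mu^m(t)\to\chi_m$ by \eqref{mucond}; and $\tilde\Lambda_N=O(\mu^{(N+1)/2}(t))=O(\mu^n(t))$ since $N\geq 2n-1$. Because $\mu^n(t)/\mu^{(2n-1)/2}(t)=\mu^{1/2}(t)\to 0$, there exists $t_1\geq t_0$ such that
\[
\Lambda(\rho,\psi,t)\geq \tfrac{\lambda_0}{2}\,\mu^{(2n-1)/2}(t)>0
\]
for all $t\geq t_1$ and $(\rho,\psi)\in\mathfrak{D}_{\epsilon,t_0}$.

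Finally I would integrate this lower bound. As long as the trajectory remains in $\mathfrak{D}_{\epsilon,t_0}$, $\rho(t)$ is strictly increasing and
\[
\rho(t)\geq \rho(t_1)+\tfrac{\lambda_0}{2}\bigl(\zeta_{2n-1}(t)-\zeta_{2n-1}(t_1)\bigr).
\]
Since $\zeta_{2n-1}(t)\to\infty$ by hypothesis while $|\rho|\leq M_0$ inside the domain, the right-hand side eventually exceeds $M_0$, forcing a finite exit time $t_\ast>t_1$. A parallel expansion gives $\Omega(\rho,\psi,t)=\eta\rho\,\mu^{1/2}(t)+O(\mu(t))$ uniformly on $\mathfrak{D}_{\epsilon,t_0}$, so as soon as $\rho(t)$ has grown past the (small) threshold where the $\eta\rho$ term dominates the error, $\dot\psi$ has the sign of $\eta\,\mathrm{sign}(\rho)$ and $|\psi|$ also grows monotonically up to $t_\ast$.

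The main obstacle I expect is precisely the uniform-on-$\mathfrak{D}_{\epsilon,t_0}$ control of $r_\Lambda$: the coefficients $\Lambda_k(\rho,\psi)$ are polynomials of degree $k-1$ in $\rho$, while $|\rho|$ in the domain can reach the $t_0$-dependent size $\mu^{-1/2}(t_0)$. This is handled by fixing $t_0$ at the outset so that the domain becomes a genuinely bounded set and the polynomial contributions collapse to finite (possibly large) constants; the hypotheses $m\geq n$ and $N\geq 2n-1$ then ensure that every remainder in \eqref{tildeLO} decays strictly faster than the leading term $\lambda(\psi)\mu^{(2n-1)/2}(t)$ driving the exit.
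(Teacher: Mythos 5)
Your proposal is correct and follows essentially the same route as the paper: a uniform lower bound $|\Lambda|\geq K\mu^{(2n-1)/2}(t)$ on $\mathfrak D_{\epsilon,t_0}$ obtained from the nonvanishing (hence, by periodicity, sign-definite) $\lambda(\psi)$ plus the faster decay of all remainder terms, integration against $\zeta_{2n-1}(t)\to\infty$ to force exit in finite time, and then the sign-definiteness of $\Omega\approx\eta\rho\,\mu^{1/2}(t)$ once $|\rho|$ is bounded away from zero to get growth of $|\psi|$. The paper organizes the last step by exhibiting an explicit subinterval on which $|\rho|\geq D_-/2$ before exit, which is exactly the "past the threshold" clause in your argument.
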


This lemma describes the conditions for the phase drifting mode.
In this case, the phase $\varphi(t)$ of the system can significantly differ from the phase $\kappa S(t)/\varkappa$ of the perturbations and the solutions with $r(t)\approx r_0$ does not occur.

Let us show that the phase locking regime described by the solution $\rho_\ast(t)$, $\psi_\ast(t)$ of the truncated system \eqref{trsys} is preserved in the full stochastic system. Consider the function
\begin{gather*}
\mathcal M(\rho,\psi,t):= \sqrt{\left(\rho-\rho_\ast(t)\right)^2\mu^{-(n-1)}(t)+\left(\psi-\psi_\ast(t)\right)^2}
\end{gather*}
defined for all $(\rho,\psi)\in\mathbb R^2$ and $t\geq t_0$. 
We have the following theorem.
\begin{Th}\label{Th2}
Let system \eqref{PS} satisfy \eqref{fgas}, \eqref{Sform}, \eqref{rc}, and assumptions \eqref{aszero}, \eqref{asst} hold with $m\geq n$, $2p\geq n$ $\xi\eta<0$ and $\tilde\gamma_h <0$. Then, for all $\epsilon_1>0$, $\epsilon_2>0$, $l\in (0,1)$ and $t_\ast>t_0$ there exist $\delta_1>0$ and $\delta_2>0$ such that for any $0<\varepsilon\leq \delta_2$ the solution $\rho(t)$, $\psi(t)$ of system \eqref{rhopsi} with initial data
$ \mathcal M(\rho(t_\ast),\psi(t_\ast),t_\ast)\leq \delta_1$  satisfies
\begin{gather}\label{rpst}
\mathbb P\left(\sup_{0<t-t_\ast\leq \mathcal T} \mathcal M(\rho(t),\psi(t),t)\geq \epsilon_2\right)\leq \epsilon_1,
\end{gather}
where
\begin{gather*}
\mathcal T=
\begin{cases}
\infty, & \text{if} \quad \mu^{\frac{2p-n}{2}}(t)\in L_1(t_0,\infty),\\
\mathcal T_\varepsilon, & \text{if} \quad  \mu^{\frac{2p-n}{2}}(t)\not\in L_1(t_0,\infty)
\end{cases}
\end{gather*}
and $\mathcal T_\varepsilon>0$ is the root of the equation
\begin{gather}\label{Tequat}
\zeta_{2p-n}(\mathcal T_\varepsilon+t_\ast)-\zeta_{2p-n}(t_\ast)=\varepsilon^{-2(1-l)}.
\end{gather} 
\end{Th}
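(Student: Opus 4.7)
The idea is to use a stochastic Lyapunov function comparing the full process $(\rho(t),\psi(t))$ of \eqref{rhopsi} with the deterministic phase-locked attractor $(\rho_\ast(t),\psi_\ast(t))$ furnished by Lemma~\ref{Lem2}. In the shifted variables $u=\rho-\rho_\ast(t)$, $v=\psi-\psi_\ast(t)$, the natural base candidate is $V_0:=\mathcal M^2=u^2\mu^{-(n-1)}(t)+v^2$. Applying It\^{o}'s formula along \eqref{rhopsi}, $\mathcal L V_0$ splits into: (a) the drift part $2u\mu^{-(n-1)}(t)\bigl[\Lambda(\rho,\psi,S(t),t)-\Lambda(\rho_\ast,\psi_\ast,S(t),t)\bigr]+2v\bigl[\Omega(\rho,\psi,S(t),t)-\Omega(\rho_\ast,\psi_\ast,S(t),t)\bigr]$; (b) the time derivative of the weight $-(n-1)\ell(t)u^2\mu^{-(n-1)}(t)$; (c) the It\^{o} correction $\tfrac12\varepsilon^2\,\mathrm{tr}({\bf C}{\bf C}^T\mathrm{Hess}\,V_0)$ which, by \eqref{sigmaij} and the assumption $2p\ge n$, is of order $\varepsilon^2\mu^{2p-n}(t)$ and, since $\mu\le 1$, is dominated by a constant multiple of $\varepsilon^2\mu^{(2p-n)/2}(t)$.

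Because the limiting system near $(\rho_\ast,\psi_\ast)$ is of centre type ($\xi\eta<0$), the leading quadratic form in (a) is merely conservative and the dissipation sought for a Lyapunov estimate appears only at order $\mu^{h/2}(t)$ through the averaged trace $\partial_\rho\Lambda_k+\partial_\psi\Omega_k$, combined when $h=2m$ with (b) into the effective coefficient $\tilde\gamma_h$ of \eqref{zetah}. Following the averaging scheme of Theorem~\ref{Th1}, I would modify $V_0$ by a near-identity correction
\begin{gather*}
V:=V_0+\sum_{k=1}^{K}\mu^{k/2}(t)\,\Phi_k(\rho,\psi,S,t),
\end{gather*}
where each $\Phi_k$ is bounded, quadratic in $(u,v)$ and $2\pi$-periodic in both $\psi$ and $S$, chosen so as to cancel the fast $S$-oscillating drift pieces of order up to $\mu^{(N+1)/2}(t)$ by integration by parts in $S$, and the conservative off-diagonal quadratic terms of order $\mu^{k/2}(t)$ for $k<h$ by averaging in $\psi$. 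For $K$ large enough, on a sufficiently small ball about $(\rho_\ast(t),\psi_\ast(t))$ and for $\varepsilon\le\delta_2$, one gets $V_0/2\le V\le 2V_0$ together with
\begin{gather*}
\mathcal L V\le -c_1\,\mu^{h/2}(t)\,V+c_2\,\varepsilon^2\,\mu^{(2p-n)/2}(t),
\end{gather*}
where $c_1>0$ is proportional to $|\tilde\gamma_h|$ and $c_2>0$ depends only on the coefficients of \eqref{rhopsi}.

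The probability estimate \eqref{rpst} is then derived by a Kolmogorov--Khasminskii-type supermartingale argument (cf.~\cite{FW98}). Introducing the first exit time $\tau:=\inf\{t>t_\ast:V(\rho(t),\psi(t),t)\ge\epsilon_2^2/4\}$, the process
\begin{gather*}
V(\rho(t\wedge\tau),\psi(t\wedge\tau),t\wedge\tau)+c_2\varepsilon^2\int_{t_\ast}^{t\wedge\tau}\mu^{(2p-n)/2}(\varsigma)\,d\varsigma
\end{gather*}
is a nonnegative supermartingale, and Doob's maximal inequality yields a bound on $\mathbb P(\sup_t\mathcal M\ge\epsilon_2)$ in terms of the initial value $V(\rho(t_\ast),\psi(t_\ast),t_\ast)$ and the accumulated noise functional $\varepsilon^2[\zeta_{2p-n}(t_\ast+T)-\zeta_{2p-n}(t_\ast)]$. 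Choosing $\delta_1$ small controls the initial term by $\epsilon_1\epsilon_2^2/8$; taking $T=\infty$ in the integrable regime, or $T=\mathcal T_\varepsilon$ defined by \eqref{Tequat} together with $\varepsilon\le\delta_2$ in the non-integrable regime (where $\varepsilon^2\cdot\varepsilon^{-2(1-l)}=\varepsilon^{2l}\to 0$ as $\varepsilon\to 0$), controls the noise term by the same amount. The main technical obstacle will be the Lyapunov construction of step~2: arranging the corrections $\Phi_k$ so that the sign-definite dissipation of order $\mu^{h/2}(t)$ at $(\rho_\ast,\psi_\ast)$ is preserved after cancelling both the oscillatory and the conservative pieces of (a), while the additional It\^{o} corrections induced by the $\Phi_k$ are absorbed into the $\varepsilon^2\mu^{(2p-n)/2}(t)$ term rather than inflating it.
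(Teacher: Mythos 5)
Your overall route coincides with the paper's: pass to the deviation variables $u=(\rho-\rho_\ast(t))\mu^{-(n-1)/2}(t)$, $v=\psi-\psi_\ast(t)$ (the substitution \eqref{subsast}), build a quadratic stochastic Lyapunov function whose generator satisfies $\mathfrak L\mathcal V_0\le -K_1\mu^{h/2}(t)\Delta^2+\varepsilon^2K_2\mu^{(2p-n)/2}(t)$, add a compensator to obtain a nonnegative supermartingale for the stopped process, apply Doob's inequality, and split into the two cases according to the integrability of $\mu^{(2p-n)/2}$. Your bookkeeping of orders (the It\^{o} correction of size $\varepsilon^2\mu^{2p-n}$ dominated by $\varepsilon^2\mu^{(2p-n)/2}$ under $2p\ge n$; the dissipation coefficient $\tilde\gamma_h$ absorbing the $\ell(t)$ contribution of the weight when $h=2m$) also matches the paper.

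Two issues remain. First, a concrete error: the process you display, $V(t\wedge\tau)+c_2\varepsilon^2\int_{t_\ast}^{t\wedge\tau}\mu^{(2p-n)/2}(\varsigma)\,d\varsigma$, is \emph{not} a supermartingale — its drift is $\mathcal LV+c_2\varepsilon^2\mu^{(2p-n)/2}\le 2c_2\varepsilon^2\mu^{(2p-n)/2}$, which is positive near the target trajectory. The compensator must be the \emph{remaining} noise budget, $\varepsilon^2K_2\left(\zeta_{2p-n}(t_\ast+\mathcal T)-\zeta_{2p-n}(t\wedge\tau)\right)$: it is nonnegative, decreasing in $t$, and its time derivative cancels the $+\varepsilon^2K_2\mu^{(2p-n)/2}$ term. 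This is exactly \eqref{SLF1} in the paper, and with this correction your Doob argument and your choices of $\delta_1$, $\delta_2$ and $\mathcal T_\varepsilon$ via \eqref{Tequat} go through. Second, you leave the construction of the corrected Lyapunov function as a plan and explicitly flag its hardest step as unresolved; the paper settles it with the closed-form choice $\mathcal V_0=\mu^{-n/2}(t)(\mathrm{sgn}\,\eta)\{H(u,v,t)+\mu^{h/2}(t)\tilde\gamma_{h,z}uv/2\}$ of \eqref{LFn} — the Hamiltonian of the deviation system plus a single $uv$ cross term — rather than an iterative cascade of corrections $\Phi_k$. Note that no averaging in $S$ is needed at this stage, since Theorem~\ref{Th1} has already removed the fast $S$-dependence from the drift of \eqref{rhopsi} below order $\mu^{(N+1)/2}$; and since $\xi\eta<0$ makes $H$ positive definite in the leading order, the one cross term suffices to render $\mathfrak L_0\mathcal V_0$ negative definite of order $\mu^{h/2}\Delta^2$. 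Filling in your step~2 along these lines would complete the argument.
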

The proof is contained in Section~\ref{sec5}.

Note that estimates of the form \eqref{rpst} are associated with the stability in probability~\cite[\S 5.3]{RH12}, \cite[Ch. II, \S 1]{HJK67}. 

Thus, combining Theorem~\ref{Th1} and Theorem~\ref{Th2}, we see that there exists a stable phase locking regime in system \eqref{PS} with $r(t)\approx r_0$ and $\varphi(t)\approx \kappa S(t)/\varkappa+\psi_0$ with a high probability. In particular, depending on the decaying rate of the stochastic perturbations, the resonant regime is preserved over infinite or at least asymptotically large time intervals as $\varepsilon\to 0$.

\section{Change of variables}
\label{sec3}

\subsection{Amplitude residual and phase shift}

Substituting \eqref{ch1} into \eqref{PS} yields the following system:
\begin{gather}\label{RPsys}
d\begin{pmatrix}
R \\ \Psi
\end{pmatrix}=
{\bf b}(R,\Psi,S(t),t)\, dt + \varepsilon{\bf B}(R,\Psi,S(t),t)\,d{\bf w}(t),
\end{gather}
where ${\bf b}(r,\varphi,S,t)\equiv (b_1(r,\varphi,S,t),b_2(r,\varphi,S,t))^T$, ${\bf B}(R,\Psi,S,t)\equiv \{\beta_{i,j}(R,\Psi,S,t)\}_{2\times 2}$, 
\begin{align*}
b_1(R,\Psi,S,t) &\equiv
 \frac{1}{\sqrt{\mu(t)}} a_1\left(r_0+\sqrt{\mu(t)} R,\frac{\kappa}{\varkappa}S +\Psi,S,t\right)-\frac{\ell(t)R}{2}, \\
 b_2(R,\Psi,S,t) &
\equiv
\nu\left(r_0+\sqrt{\mu(t)} R\right)-\frac{\kappa}{\varkappa}S'(t)+a_2\left(r_0+\sqrt{\mu(t)} R,\frac{\kappa}{\varkappa}S +\Psi,S,t\right), \\
\beta_{1,j}(R,\Psi,S,t) & \equiv  \frac{1}{\sqrt{\mu(t)}} \alpha_{1,j}\left(r_0+\sqrt{\mu(t)} R,\frac{\kappa}{\varkappa}S +\Psi,S,t\right), \\
\beta_{2,j}(R,\Psi,S,t) &  \equiv \alpha_{2,j}\left(r_0+\sqrt{\mu(t)} R,\frac{\kappa}{\varkappa}S +\Psi,S,t\right).
\end{align*}
Using \eqref{fgas}, \eqref{Sform} and \eqref{rc}, we get
\begin{gather}\label{FGas}\begin{split}
{\bf b}(R,\Psi,S(t),t)&= \sum_{k=1}^{M}  {\bf b}_k(R,\Psi,S(t)) \mu^{\frac{k}{2}}(t)-\frac{\ell(t)}{2}\begin{pmatrix} R \\ 0 \end{pmatrix}+\tilde {\bf b}_{M}(R,\Psi,t), \\ 
{\bf B}(R,\Psi,S(t),t)&= \sum_{k=2p-1}^{2p-1+M} {\bf B}_k(R,\Psi,S(t))\mu^{\frac{k}{2}}(t)+\tilde {\bf B}_{2p-1+M}(R,\Psi,t), 
\end{split}
\end{gather}
where $\tilde {\bf b}_{M}(R,\Psi,t)=(\tilde b_{1,M}(R,\Psi,t),\tilde b_{2,M}(R,\Psi,t))^T$,
\begin{gather}\label{tildeas}
\begin{split}
& |\tilde {\bf b}_{M}(R,\Psi,t)|=\mathcal O\left(\mu^{\frac{M+1}{2}}(t)\right), \\ 
& \|\tilde {\bf B}_{2p-1+M}(R,\Psi,t)\|=\mathcal O\left(\mu^{\frac{2p+M}{2}}(t)\right)
\end{split}
\end{gather}
as  $t\to\infty$ uniformly for all $|R|<\infty$, $(\Psi,S)\in\mathbb R^2$ and for any integer $M\geq 1$. The coefficients ${\bf b}_k(R,\Psi,S)\equiv (b_{1,k}(R,\Psi,S),b_{2,k}(R,\Psi,S))^T$ and ${\bf B}_k(R,\Psi,S)\equiv \{\beta_{i,j,k}(R,\Psi,S)\}_{2\times 2}$ are $2\pi$-periodic in $\Psi$ and $2\pi \varkappa$-periodic in $S$.  In particular,
\begin{gather}\label{FkGk}
\begin{split}
& b_{1,k}(R,\Psi,S)\equiv \sum_{\substack{ i+2j=k+1 \\ i\geq n, j\geq 1}} \partial_r^i a_{1,j}\left(r_0,\frac{\kappa}{\varkappa}S +\Psi,S\right) \frac{R^i}{i!},\\
& b_{2,k}(R,\Psi,S)\equiv \partial_r^k\nu(r_0)\frac{R^k}{k!}-\frac{\kappa}{\varkappa} s_{k/2}+\sum_{\substack{ i+2j=k \\ i\geq n, j\geq 1}} \partial_r^i a_{2,j}\left(r_0,\frac{\kappa}{\varkappa}S +\Psi,S\right) \frac{R^i}{i!},\\
& \beta_{1,j,k}(R,\Psi,S)\equiv \sum_{\substack{ i+2l=k+1 \\ i\geq 0, l\geq p}} \partial_r^i \alpha_{1,j,k}\left(r_0,\frac{\kappa}{\varkappa}S +\Psi,S\right) \frac{R^i}{i!},\\
& \beta_{2,j,k}(R,\Psi,S)\equiv \sum_{\substack{ i+2l=k \\ i\geq 0, l\geq p}} \partial_r^i \alpha_{1,j,k}\left(r_0,\frac{\kappa}{\varkappa}S +\Psi,S\right) \frac{R^i}{i!}.
\end{split}
\end{gather}
We set $s_j=0$ for $j>d$ and $s_{k/2}=0$ for odd $k$. Note that $b_{1,j}(R,\Psi,S) \equiv \partial_\psi b_{2,j}(R,\Psi,S) \equiv 0$ for $j<2n-1$,
\begin{equation}\label{eq26}
\begin{array}{rcl}
b_{1,2n-1}(R,\Psi,S) & \equiv & \displaystyle a_{1,n}\left(r_0,\frac{\kappa}{\varkappa}S +\Psi,S\right), \\  
b_{2,1}(R,\Psi,S) & \equiv &\eta R, \\  
\beta_{1,j,2p-1}(R,\Psi,S)& \equiv &\displaystyle  \alpha_{1,j,p}\left(r_0,\frac{\kappa}{\varkappa}S +\Psi,S\right), \\ 
\beta_{2,j,2p-1}(R,\Psi,S)& \equiv& 0, \\
\beta_{2,j,2p}(R,\Psi,S)&\equiv &\displaystyle  \alpha_{2,j,p}\left(r_0,\frac{\kappa}{\varkappa}S +\Psi,S\right). 
\end{array}
\end{equation}

\subsection{ Near identity transformation}
Note that system \eqref{RPsys} is asymptotically autonomous with the limiting system 
\begin{gather*}
\frac{d\hat R}{dt}=0, \quad \frac{d\hat\Psi}{dt}=0, \quad \frac{d\hat S}{dt}=s_0.
\end{gather*}
In this case, the phase of the perturbations $S(t)$ can be considered as a fast variable as $t\to\infty$, and system \eqref{RPsys} can be simplified by averaging the drift terms of equations. Note that such method is effective in deterministic problems with a small parameter~\cite{BM61,AKN06}. The transformation is sought in the following form:
\begin{gather}\label{rpch}\begin{split}
	U_N(R,\Psi,S,t)&=R+\sum_{k=1}^N u_k(R,\Psi,S) \mu^{\frac{k}{2}}(t), \\
	V_N(R,\Psi,S,t)&=\Psi+\sum_{k=1}^N v_k(R,\Psi,S) \mu^{\frac{k}{2}}(t)
\end{split}
\end{gather}
with some integer $N\in [2n-1,2m]$. The coefficients $u_k(R,\Psi,S)$ and $v_k(R,\Psi,S)$ are assumed to be periodic with respect to $\Psi$ and $S$, and are chosen in such a way that the system written in the new variables 
$
\rho(t)\equiv U_N(R(t),\Psi(t),S(t),t)$, $
\psi(t)\equiv V_N(R(t),\Psi(t),S(t),t)
$
takes the form \eqref{rhopsi}, where the drift terms of equations do not depend explicitly on $S(t)$ at least in the first $N$ terms of the asymptotics as $t\to\infty$. Define the operators
\begin{align*}
&	\mathcal L U:=\partial_t U  + \left(\nabla_{(R,\Psi)} U\right)^T {\bf b}(R,\Psi,S(t),t)+\frac{\varepsilon^2}{2}{\hbox{\rm tr}}\left({\bf B}^T(R,\Psi,S(t),t) {\bf H}_{(R,\Psi)}(U){\bf B}(R,\Psi,S(t),t)\right), \\ 
& \nabla_{(R,\Psi)} U := \begin{pmatrix}\partial_{R} U  \\  \partial_{\Psi} U \end{pmatrix}, \\ 
&	{\bf H}_{(R,\Psi)}(U):= 	
\begin{pmatrix}
		\partial_{R}^2 U & \partial_{R}\partial_{\Psi} U \\
		\partial_{\Psi}\partial_{R} U & \partial_{\Psi}^2 U
\end{pmatrix}
\end{align*}
for any smooth function $U(R,\Psi,t)$. Note that $\mathcal L$ is the generator of the process defined by system \eqref{RPsys}. Then, by applying It\^{o}'s formula, we get
\begin{equation}\label{Itorhopsi}
\begin{array}{rcl}
d\rho &=&\displaystyle \mathcal L  U_N\, dt + \varepsilon(\nabla_{ (R,\Psi)} U_N)^T {\bf B}( R,\Psi,S(t),t)\, d{\bf w}, \\  
d\psi&=&\displaystyle \mathcal L  V_N\, dt + \varepsilon(\nabla_{ (R,\Psi)} V_N)^T {\bf B}( R,\Psi,S(t),t)\, d{\bf w}.
\end{array}
\end{equation}
Taking into account \eqref{Sform} and \eqref{FGas} with $M=N$, we see that the drift terms in \eqref{Itorhopsi} take the form
\begin{gather}\label{drpas}
\begin{split}
\mathcal L\begin{pmatrix} U_N  \\ V_N \end{pmatrix} = &
\sum_{k=1}^N \mu^{\frac{k}{2}}(t)\left\{{\bf b}_k+s_0\partial_S \begin{pmatrix} u_k \\ v_k \end{pmatrix}\right\} \\
& + 
\sum_{k=2}^N \mu^{\frac{k}{2}}(t) \sum_{j=1}^{k-1}\left\{ b_{1,j}\partial_R + b_{2,j}\partial_\Psi+s_{j/2} \partial_S \right\}\begin{pmatrix} u_{k-j} \\ v_{k-j} \end{pmatrix}\\
 & +\frac{\varepsilon^2}{2}\sum_{k=4p-1}^N \mu^{\frac{k}{2}}(t) \sum_{
        \substack{
             i+j+l=k\\
            i\geq 2p-1\\ j\geq 2p-1\\ l\geq 1
                }
            } 
\begin{pmatrix}
{\hbox{\rm tr}}\left \{{\bf B}^T_{i} {\bf H}_{(R,\Psi)}(u_{l} ){\bf B}_{j}\right\} \\ 
{\hbox{\rm tr}}\left\{{\bf B}^T_{i} {\bf H}_{(R,\Psi)}(v_{l} ){\bf B}_{j}\right\} \end{pmatrix} -\frac{\ell(t)}{2}\begin{pmatrix}R \\ 0\end{pmatrix} + \tilde{\bf Z}_N(R,\Psi,t),
\end{split}
\end{gather}
where  
\begin{gather}\label{zpas}\begin{split}
\tilde{\bf Z}_N(R,\Psi,t)\equiv & \tilde {\bf b}_N(R,\Psi,t)+
\sum_{k=N+1}^{2N} \mu^{\frac{k}{2}}(t) \sum_{j=1}^{k-1}\left\{ b_{1,j}\partial_R + b_{2,j}\partial_\Psi+s_{j/2} \partial_S \right\}\begin{pmatrix} u_{k-j} \\ v_{k-j} \end{pmatrix}\\
 & +\frac{\varepsilon^2}{2}\sum_{k= N+1}^{3N} \mu^{\frac{k}{2}}(t) \sum_{
        \substack{
            i+j+l=k\\
            i\geq 2p-1\\ j\geq 2p-1\\ l\geq 1
                }
            } 
\begin{pmatrix}
{\hbox{\rm tr}}\left ({\bf B}^T_{i} {\bf H}_{(R,\Psi)}(u_{l} ){\bf B}_{j}\right) \\ 
{\hbox{\rm tr}}\left({\bf B}^T_{i} {\bf H}_{(R,\Psi)}(v_{l} ){\bf B}_{j}\right) \end{pmatrix}\\
 & +\frac{\varepsilon^2}{2}\sum_{k= 1}^{N} \mu^{\frac{k}{2}}(t) 
\begin{pmatrix}
{\hbox{\rm tr}}\left (\tilde {\bf B}^T_{2p-1+N} {\bf H}_{(R,\Psi)}(u_{k} )\tilde{\bf B}_{2p-1+N}\right) \\ 
{\hbox{\rm tr}}\left(\tilde{\bf B}^T_{2p-1+N} {\bf H}_{(R,\Psi)}(v_{k} )\tilde{\bf B}_{2p-1+N}\right) \end{pmatrix} \\
& +\sum_{k=1}^N \mu^{\frac{k}{2}}(t) \left(\frac{\ell(t)k}{2}+\left(-\frac{\ell(t) R}{2}+\tilde b_{1,N}\right)\partial_R+\tilde b_{2,N}\partial_\Psi\right)\begin{pmatrix} u_{k} \\ v_{k} \end{pmatrix}.
\end{split}
\end{gather}
Combining \eqref{mucond}, \eqref{FGas} and \eqref{tildeas}, we get
\begin{gather}\label{tildeZas}
|\tilde {\bf Z}_{N}(R,\Psi,t)|=\mathcal O\left(\mu^{\frac{N+1}{2}}(t)\right)
\end{gather}
 as $t\to\infty$ uniformly for all $|R|<\infty$ and $\Psi\in\mathbb R$.
In \eqref{drpas} and \eqref{zpas} it is assumed that $u_k(R,\Psi,S)\equiv v_k(R,\Psi,S)\equiv 0$ for $k\leq 0$ and $k>N$. Comparing the drift terms in \eqref{rhopsi} with \eqref{drpas}, we obtain the following system for the coefficients $u_k(R,\Psi,S)$ and $v_k(R,\Psi,S)$:
\begin{gather}\label{ukvk}
s_0\partial_S \begin{pmatrix} u_k \\ v_k \end{pmatrix}=\begin{pmatrix} \Lambda_k(R,\Psi) \\ \Omega_k(R,\Psi)\end{pmatrix}-{\bf b}_k(R,\Psi,S)+\tilde{\bf b}_k(R,\Psi,S), \quad k=1,\dots, N,
\end{gather} 
where each $\tilde {\bf b}_k(R,\Psi,S)\equiv (\tilde b_{1,k}(R,\Psi,S),\tilde b_{2,k}(R,\Psi,S))^T$ is expressed in terms of $\{u_{j}, v_{j},\Lambda_{j}, \Omega_{j}\}_{j=1}^{k-1}$. In particular, if $p=n=1$, then
\begin{equation}\label{FGtilde}
\begin{array}{rl}
\tilde {\bf b}_1 \equiv 
& \begin{pmatrix}
0\\ 0
\end{pmatrix},\\
\tilde {\bf b}_2 \equiv 
& 
(u_1\partial_R+v_1\partial_\Psi)
\begin{pmatrix}
\Lambda_1\\ \Omega_1
\end{pmatrix}
-(b_{1,1}\partial_R + b_{2,1}\partial_\Psi )\begin{pmatrix} u_{1} \\ v_{1} \end{pmatrix}, \\
\tilde {\bf b}_3 \equiv
& \displaystyle
\sum_{i+j=3}(u_i\partial_R+v_i\partial_\Psi)
\begin{pmatrix}
\Lambda_j\\ \Omega_j
\end{pmatrix} + \frac{1}{2}\left(u_1^2\partial_R^2+2u_1v_1 \partial_R\partial_\Psi+v_1^2 \partial_\Psi^2\right)\begin{pmatrix}
\Lambda_1\\ \Omega_1
\end{pmatrix} \\
& \displaystyle -\sum_{j=1}^2 \left\{ b_{1,j}\partial_R + b_{2,j}\partial_\Psi+s_{j/2}\partial_S\right\}\begin{pmatrix} u_{3-j} \\ v_{3-j} \end{pmatrix} -\frac{\varepsilon^2}{2}\begin{pmatrix}
{\hbox{\rm tr}}\left ({\bf B}^T_{1} {\bf H}_{(R,\Psi)}(u_{1} ){\bf B}_{1}\right) \\ 
{\hbox{\rm tr}}\left({\bf B}^T_{1} {\bf H}_{(R,\Psi)}(v_{1} ){\bf B}_{1}\right) \end{pmatrix}.
\end{array}
\end{equation}
Let us define 
\begin{gather}\label{LambdaOmegak}
\begin{pmatrix}
\Lambda_k(R,\Psi) \\
\Omega_k(R,\Psi)
\end{pmatrix} \equiv \left\langle {\bf b}_k(R,\Psi,S)-\tilde {\bf b}_k(R,\Psi,S)\right\rangle_{\varkappa S}.
\end{gather}
Hence, system \eqref{ukvk} is solvable in the class of functions that are $2\pi\varkappa$-periodic in $S$ with zero mean. 
Note that the functions $u_k(R,\Psi,S)$, $v_k(R,\Psi,S)$, $\Lambda_k(R,\Psi)$, $\Omega_k(R,\Psi)$ are $2\pi$-periodic in $\Psi$. Moreover, it follows from \eqref{FkGk}, \eqref{FGtilde} and \eqref{LambdaOmegak}, that $u_k(R,\Psi,S)$, $\Lambda_k(R,\Psi)$ are polynomials in $R$ of degree $k-1$, and $v_k(R,\Psi,S)$, $\Omega_k(R,\Psi)$ are polynomials in $R$ of degree $k$. It can easily be checked that $\Lambda_k(R,\Psi)\equiv \partial_\Psi \Omega_k(R,\Psi)\equiv 0$ for $k< 2n-1$,  $\Lambda_{2n-1}(R,\Psi)\equiv\langle a_{1,n}(r_0,\kappa S/\varkappa+\Psi,S)\rangle_{\varkappa S}$ and $\Omega_1(R,\Psi)\equiv \eta R$. Combining this with \eqref{rpch}, we see that for all $\epsilon\in (0,\mathcal R)$ there exists $t_0\geq \tau_0$ such that
\begin{gather}\label{UVNest}
 |U_N(R,\Psi,S,t)-R|\leq \epsilon, \quad |V_N(R,\Psi,S,t)-\Psi|\leq \epsilon, \quad |{\hbox{\rm det}}\, {\bf J}_N(R,\Psi,S,t)-1|\leq \epsilon
\end{gather}
for all $(R,\Psi)\in\mathfrak D_{0,t_0}$, $t\geq t_0$ and $S\in\mathbb R$, where 
\begin{gather*}
{\bf J}_N(R,\Psi,S,t) := \begin{pmatrix}
        \partial_{R} U_N(R,\Psi,S,t) & \partial_\Psi U_N(R,\Psi,S,t)\\
        \partial_{R} V_N(R,\Psi,S,t) & \partial_\Psi V_N(R,\Psi,S,t)
    \end{pmatrix}.
\end{gather*}
Thus, \eqref{ch2} is invertible. Denote by $R= u(\rho, \psi, t)$, $\Psi=v(\rho,\psi,t)$ the inverse transformation defined for all $(\rho,\psi)\in\mathfrak D_{\epsilon,t_0}$ and $t\geq t_0$.
Then, 
\begin{align*}
\begin{pmatrix}
\tilde \Lambda_N(\rho,\psi,t)\\
\tilde \Omega_N(\rho,\psi,t)
\end{pmatrix}
\equiv & \,
\mathcal L
\begin{pmatrix}
U_N (R,\Psi,S(t),t)\\
V_N(R,\Psi,S(t),t)
\end{pmatrix}\Big|_{\substack{R=u(\rho, \psi, t)  \\ \Psi=v(\rho,\psi,t)}}  - \sum_{k=1}^N  \begin{pmatrix}
\Lambda_k(\rho,\psi)\\
\Omega_k(\rho,\psi)
\end{pmatrix} \mu^{\frac{k}{2}}(t), \\
{\bf C}(\rho,\psi,S(t),t)\equiv & \,
{\bf J}_N(R,\Psi,S(t),t)  {\bf B}(R,\Psi,S(t),t)\Big|_{\substack{ R=u(\rho, \psi, t) \\  \Psi=v(\rho,\psi,t)}}.
\end{align*}
Combining this with \eqref{drpas}, \eqref{tildeZas} and \eqref{eq26}, we get \eqref{tildeLO} and \eqref{sigmaij}. 
From \eqref{UVNest} it follows that $|\rho-u(\rho,\psi,t)|\leq \epsilon$ and $|\psi-v(\rho,\psi,t)|\leq \epsilon$ for all $(\rho,\psi)\in\mathfrak D_{\epsilon,t_0}$ and $t\geq t_0$. Define $\tilde u_N(\rho,\psi,t)\equiv  u(\rho,\psi,t)-\rho$, $\tilde v_N(\rho,\psi,t)\equiv  v(\rho,\psi,t)-\psi$. Thus, we have  \eqref{tildeest}. This completes the proof of Theorem~\ref{Th1}.

\section{Analysis of the truncated system}
\label{sec4}
\begin{proof}[Proof of Lemma~\ref{Lem2}]
Consider the following functions:
\begin{gather}\label{srp}
\varrho_\ast(t)=\sum_{k=1}^{h-1} \varrho_k \mu^\frac{k}{2}(t) , \quad 
\phi_\ast(t)=\psi_0+\sum_{k=1}^{h-1}\phi_k  \mu^\frac{k}{2}(t) ,
\end{gather}
where $\varrho_k$, $\phi_k$ are some constants. Substituting \eqref{srp} into \eqref{trsys} and equating the terms of like powers of $\mu(t)$ yield the chain of linear equations for the coefficients $\varrho_k$, $\phi_k$
\begin{gather}\label{rpk}
\begin{pmatrix}  
	\eta & 0 \\
 0 & \xi 
\end{pmatrix} 
\begin{pmatrix} 
 \varrho_k \\ \phi_k 
\end{pmatrix} 
= 
\begin{pmatrix} 
 \mathfrak  F_k \\ \mathfrak  G_k 
\end{pmatrix}, \quad k=1,\dots,h-1,
\end{gather}
where $\mathfrak  F_k$, $\mathfrak G_k$ are expressed through $\varrho_1,\phi_1, \dots, \varrho_{k-1}, \phi_{k-1}$. For instance,
\begin{align*}
\mathfrak  F_1=&-\Omega_2(0,\psi_0), \\ 
\mathfrak  G_1=&-\Lambda_{2n}(0,\psi_0),\\
\mathfrak  F_2=&-\Omega_{3}(0,\psi_0)-\left(\varrho_1\partial_\rho +\phi_1 \partial_\psi\right)\Omega_{2}(0,\psi_0), \\ 
\mathfrak  G_2=&-\Lambda_{2n+1}(0,\psi_0)-\sum_{i+j=3}\left(\varrho_i\partial_\rho +\phi_i \partial_\psi\right)\Lambda_{2n-2+j}(0,\psi_0) -\frac{1}{2}\left(\varrho_1^2\partial^2_\rho+2\varrho_1\phi_1\partial_\rho\partial_\psi+\phi_1^2\partial^2_\psi\right)\Lambda_{2n-1}(0,\psi_0).
\end{align*}
Since $\xi\eta\neq 0$, we see that system \eqref{rpk} is solvable. By construction, 
\begin{gather}
	\label{Zeq}
		\begin{split}
&\mathcal R_{\varrho}(t)\equiv \frac{d\varrho_{\ast}(t)}{dt}-\Lambda(\varrho_{\ast}(t),\phi_{\ast}(t),S(t),t)=\mathcal O\left(\mu^{\frac{h+2n-1}{2}}(t)\right),\\
&\mathcal R_{\phi}(t)\equiv \frac{d\phi_{\ast}(t)}{dt}-\Omega(\varrho_{\ast}(t),\phi_{\ast}(t),S(t),t)=\mathcal O\left(\mu^{\frac{h+1}{2}}(t)\right), \quad t\to\infty.
\end{split}
\end{gather}
Consider the change of variables
\begin{gather} \label{subsM}
\varrho(t)=\varrho_{\ast}(t)+\mu^{z+\frac{n-1}{2}}(t) u(t), \quad 
\phi(t)=\phi_{\ast}(t)+\mu^z(t) v(t).
\end{gather}
Substituting \eqref{subsM} into \eqref{trsys}, we obtain a perturbed near-Hamiltonian system
\begin{gather}\label{uvsys}
\frac{du}{dt}=-\partial_v H(u,v,t) + f(t), \quad 
\frac{dv}{dt}=\partial_u H(u,v,t)+G(u,v,t) + g(t), 
\end{gather}
with  
\begin{gather}\label{HGfg}\begin{split}
H(u,v,t)\equiv & \int\limits_0^u \mathcal G_\ast(w,0,t)\,dw - \int\limits_0^v \mathcal F_\ast(u,w,t)\,dw,\\ 
G(u,v,t)\equiv & \int\limits_0^v \left(\partial_u \mathcal F_\ast(u,w,t)+\partial_v \mathcal G_\ast(u,w,t)\right)\,dw,\\
f(t)\equiv & -\mu^{-z-\frac{n-1}{2}}(t) \mathcal R_\varrho(t),\\
g(t)\equiv & -\mu^{-z}(t)\mathcal R_\phi(t),
\end{split}
\end{gather}
where 
\begin{align*}
\mathcal  F_\ast(u,v,t)\equiv & \, \mu^{-z-\frac{n-1}{2}}(t)\left(\Lambda(\varrho_{\ast}(t)+ \mu^{z+\frac{n-1}{2}}(t) u,\phi_{\ast}(t)+\mu^{z}(t) v, S(t),t)-\Lambda(\varrho_\ast(t),\phi_\ast(t),S(t),t)\right) \\ & -\left(z+\frac{n-1}{2}\right) \ell(t) u, \\
\mathcal  G_\ast(u,v,t)\equiv &\, \mu^{-z}(t)\left(\Omega(\varrho_{\ast}(t)+ \mu^{z+\frac{n-1}{2}}(t) u,\phi_{\ast}(t)+\mu^{z}(t) v, S(t),t)-\Omega(\varrho_\ast(t),\phi_\ast(t),S(t),t)\right) \\
 & -z \ell(t) v.
\end{align*}
Define
\begin{gather}\label{ellchi}
\tilde \ell(t):=\frac{\ell(t)}{\mu^m(t)}-\chi_m, \quad \tilde \gamma_{h,z}:=\tilde\gamma_h-\delta_{2m,h}\chi_m 2z=\gamma_h-\delta_{2m,h}\chi_m \left(2z+\frac{n}{2}\right).
\end{gather}
Note that $\tilde \ell(t) \to 0$ as $t\to\infty$. By choosing $z\in(0,z_0)$, we can ensure that $\tilde \gamma_{h,z}<0$. It follows from  \eqref{aszero} and \eqref{Zeq} that
\begin{gather}\label{HU1}
\begin{split}
H(u,v,t) = &\frac{\mu^{\frac{n}{2}}(t)}{2}(\eta u^2-\xi v^2)\left\{1 +\mathcal O(\mu^{z}(t)) \right\},\\
 G(u,v,t)= & \mu^{\frac{h}{2}}(t) v\left\{\gamma_h+\mathcal O\left(\mu^z(t)\right)\right\} - \left(2z+\frac{n}{2}\right)\ell(t) v \\
 = &  \mu^{\frac{h}{2}}(t) v \left\{\tilde \gamma_{h,z} - \left(2z+\frac{n}{2}\right)\mu^{\frac{2m-h}{2}}(t) \tilde \ell(t) +\mathcal O\left(\mu^z(t)\right)\right\}\\
 = &  \mu^{\frac{h}{2}}(t) v \left\{\tilde \gamma_{h,z} +\mathcal O(\tilde \ell(t))+\mathcal O\left(\mu^z(t)\right)\right\},\\
 f(t)= & \mathcal O\left(\mu^{\frac{h+n-2z}{2}}(t)\right),\\ 
g(t)= & \mathcal O\left(\mu^{\frac{h+1-2z}{2}}(t)\right)
\end{split}
\end{gather}
as $\Delta=\sqrt{u^2+v^2}\to 0$ and $t\to\infty$. 

Consider
\begin{gather}\label{LFn}
\begin{split}
\mathcal V_0(u,v,t)\equiv &  \mu^{-\frac{n}{2}}(t)({\hbox{\rm sgn}}\, \eta) \left\{ H(u,v,t)  + \mu^{\frac{h}{2}}(t)\frac{\tilde \gamma_{h,z} u v}{2}\right\}
\end{split}
\end{gather}
as a Lyapunov function candidate for system \eqref{uvsys}. 
It follows from \eqref{HU1} that
\begin{gather*}
\mathcal V_0(u,v,t) = \frac{|\eta| u ^2 +  |\xi| v^2}{2}  + \mathcal O(\Delta^2)\mathcal O(\mu^z(t)), \quad \Delta\to 0, \quad t\to\infty.
\end{gather*}
Hence, there exist $\Delta_1>0$ and $t_1\geq  t_0 $ such that  
\begin{gather}\label{Lnineq1}
\mathcal V_- \Delta^2 \leq\mathcal V_0(u,v,t)\leq \mathcal V_+\Delta^2
\end{gather}
 for all $(u,v,t)\in\mathbb R^3$ such that $\Delta\leq \Delta_1$ and $t\geq t_1$, where $\mathcal V_+=\max\{|\xi|,|\eta|\}$ and $\mathcal V_-=\min\{|\xi|,|\eta|\}/4$. The derivative of $\mathcal V_0(u,v,t)$ with respect to $t$ along the trajectories of the system is given by 
\begin{gather*}
\frac{d \mathcal V_0}{dt}\equiv \mathcal Z_{1}(u,v,t)+\mathcal Z_{2}(u,v,t),
\end{gather*}
where $\mathcal Z_{1}\equiv \left(\partial_t -\partial_v H \partial_u +(\partial_u H +G) \partial_v \right)\mathcal V_0$ and $\mathcal Z_{2}\equiv \left(f \partial_u +g \partial_v \right)\mathcal V_0$. It can easily be checked that 
\begin{align*}
  \mathcal Z_{1}(u,v,t)& = \mu^{\frac{h}{2}}(t)(|\eta| u ^2 + |\xi| v^2) \left(\frac{\tilde \gamma_{h,z}}{2}+\mathcal O(\Delta)+\mathcal O(\tilde \ell(t))+\mathcal O(\mu^z(t))\right), \\
\mathcal  Z_{2}(u,v,t) & = \mathcal O(\Delta)\mathcal O\left(\mu^{\frac{h+1-2z}{2}}(t)\right)
\end{align*}
as $\Delta\to 0$ and $t\to\infty$. It follows that there exist $0<\Delta_2\leq \Delta_1$ and $t_2\geq t_1$ such that 
\begin{gather*}
\mathcal Z_{1}(u,v,t)\leq - K_1  \mu^{\frac{h}{2}}(t) \Delta^2, \quad 
\mathcal Z_{2}(u,v,t)\leq K_2 \mu^{\frac{h}{2}}(t) \varsigma(t) \Delta
\end{gather*}
for all $(u,v,t)\in\mathbb R^3$ such that $\Delta\leq \Delta_2$ and $t\geq t_2$, where $K_1=|\tilde \gamma_{h,z}| \mathcal V_->0$, $K_2>0$ and $ \varsigma(t)\equiv \mu^{(1-2z)/2}(t)$ is positive strictly decreasing function as $t\geq t_2$. Therefore, for all $\epsilon\in (0,\Delta_2)$ there exist $\delta_\epsilon= 2K_2 \varsigma(t_\epsilon)/K_1$ and $t_\epsilon\geq t_2$ such that $\varsigma(t_\epsilon)\leq \epsilon K_1/ (2K_2)$ and the following inequality holds:
\begin{gather*}
\frac{d\mathcal V_0}{dt} \leq 	\mu^{\frac{h}{2}}(t) \left(-K_1+\frac{K_2\varsigma(t_\epsilon)}{\delta_\epsilon}\right)\Delta^2 \leq 0
\end{gather*}
for all $(u,v,t)\in\mathbb R^3$ such that $\delta_\epsilon\leq \Delta\leq \epsilon$ and $t\geq t_\epsilon$. Combining this with \eqref{Lnineq1}, we see that any solution of system \eqref{uvsys} with initial data $\sqrt{u^2(t_\epsilon)+v^2(t_\epsilon)}\leq \delta$, where $\delta=\max\{\delta_\epsilon,\epsilon\sqrt{\mathcal V_-/\mathcal V_+}\}$, cannot exit from the domain $\{(u,v)\in\mathbb R^2: \Delta\leq \epsilon\}$ as $t\geq t_\epsilon$. It follows from \eqref{subsM} that the trajectories of system \eqref{trsys} starting close to $\varrho_\ast(t)$, $\phi_\ast(t)$ satisfy the estimates $\varrho(t)=\varrho_{\ast}(t)+\mathcal O(\mu^{z+\frac{n-1}{2}}(t))$, $\phi(t)=\phi_{\ast}(t)+\mathcal O(\mu^z(t))$ as $t\to\infty$. Thus, there exists a solution $\rho_\ast(t)$, $\psi_\ast(t)$ of system \eqref{trsys} with asymptotics \eqref{assol}. 

To prove the stability of the solution $\rho_\ast(t)$, $\psi_\ast(t)$, consider the change of variables 
\begin{gather} \label{subsast}
\varrho(t)=\rho_{\ast}(t)+ \mu^{\frac{n-1}{2}}(t) u(t), \quad \phi(t)=\psi_{\ast}(t)+ v(t).
\end{gather} 
Substituting \eqref{subsast} into \eqref{trsys}, we obtain system \eqref{uvsys}, where
\begin{gather}\label{FGast2}
\begin{split}
\mathcal  F_\ast(u,v,t)&\equiv \mu^{-\frac{n-1}{2}}(t)\Lambda(\rho_{\ast}(t)+ \mu^{\frac{n-1}{2}}(t) u,\psi_{\ast}(t)+v, S(t),t)-\Lambda(\rho_\ast(t),\psi_\ast(t),S(t),t)-\frac{n-1}{2}\ell(t)u, \\
\mathcal  G_\ast(u,v,t)&\equiv\Omega(\rho_{\ast}(t)+\mu^{\frac{n-1}{2}}(t)u,\psi_{\ast}(t)+v, S(t),t)-\Omega(\rho_\ast(t),\psi_\ast(t),S(t),t)
\end{split}
\end{gather}
and $f(t)\equiv g(t)\equiv 0$. In this case,
\begin{gather}\label{HU2}
\begin{split}
 H(u,v,t) = &\frac{\mu^{\frac{n}{2}}(t)}{2}(\eta u^2-\xi v^2)\left\{1 +\mathcal O(\Delta)+\mathcal O(\mu^{z}(t)) \right\},\\
 G(u,v,t)=   &  \mu^{\frac{h}{2}}(t) v \left\{\tilde \gamma_{h} +\mathcal O(\Delta)+\mathcal O(\tilde \ell(t))+\mathcal O\left(\mu^z(t)\right)\right\}
\end{split}
\end{gather}
as $\Delta=\sqrt{u^2+v^2}\to 0$ and $t\to\infty$. Then, by using Lyapunov function \eqref{LFn} and repeating the arguments as given above, we get 
\begin{gather*}
\frac{d\mathcal V_0}{dt}\leq - \mu^{\frac{h}{2}}(t) K_3 \mathcal V_0
\end{gather*}
 for all $(u,v,t)\in\mathbb R^3$ such that $\Delta\leq \Delta_3$, $t\geq t_3$ with some $0<\Delta_3\leq \Delta_1$, $t_3\geq t_1$ and $K_3=|\tilde \gamma_{h}| \mathcal V_-/\mathcal V_+>0$. Integrating this inequality and taking into account \eqref{Lnineq1}, we obtain
\begin{align}\label{inequv2}
&u^2(t)+v^2(t)\leq  (u^2(t_3)+v^2(t_3))  \exp \left\{ -K_3 (\zeta_h(t)-\zeta_h(t_3))\right\} \frac{\mathcal V_+ }{\mathcal V_-}, \quad t\geq t_3.
\end{align}
Therefore, for all $\epsilon\in (0,\Delta_3)$ there exists $\delta_\epsilon=\epsilon\sqrt{\mathcal V_-/(2\mathcal V_+)}$ such that any solution with initial data $u^2(t_3)+v^2(t_3)\leq \delta_\epsilon^2$ satisfy the estimate $u^2(t)+v^2(t)\leq \epsilon^2$ as $t>t_3$. Combining this with \eqref{subsast}, we see that the solution $\rho_\ast(t)$, $\psi_\ast(t)$ is stable. Moreover, it follows from \eqref{inequv2} that the solution is asymptotically stable if $\zeta_h(t)\to\infty$ as $t\to\infty$. 
\end{proof}

\begin{proof}[Proof of Lemma~\ref{Lem3}]
To be specific, let $r_0>0$. Define $D_\pm=(\mathcal R\pm r_0)\mu^{-1/2}(t_0)-\epsilon$.  
It follows from the first equation in \eqref{trsys} and assumption \eqref{asnzero} that there exist $t_1\geq t_0$ and $K_1>0$ such that 
$ |{d\rho}/{dt}|\geq \mu^{\frac{2n-1}{2}}(t)K_1$ for all $(\rho,\psi)\in\mathfrak D_{\epsilon,t_0}$ and $t\geq t_1$. Integrating this inequality yields 
\begin{gather*}
|\rho(t)-\rho(t_1)|\geq K_1 (\zeta_{2n-1}(t)-\zeta_{2n-1}(t_1))> 0, \quad t> t_1.
\end{gather*} 
Then, $|\rho(t)|$ with initial data $|\rho(t_1)|<D_-$ increases until it reaches the boundary $|\rho|=D_-$ in a finite time $t_e>t_1$. Moreover, for all initial data $|\rho(t_1)|\leq D_-/4$ and $\psi(t_1)\in\mathbb R$ there exists $t_2 \in[ t_1,t_e)$ such that $|\rho(t)|\geq D_-/2$ as $t\in [t_2,t_e]$. Combining this with the second equation in \eqref{trsys}, we see that there exists $K_2>0$ such that 
$|{d\psi}/{dt} |\geq \mu^{\frac{1}{2}}(t) K_2$
for all $D_-/2\leq |\rho|\leq D_-$, $\psi\in\mathbb R$ and $t\geq t_2$. Then, by integration, we have
 \begin{gather*}
	|\psi(t)-\psi(t_2)|\geq K_2 (\zeta_1(t)-\zeta_1(t_2)), \quad t\in [t_2,t_e].
\end{gather*}
Therefore, $|\psi(t)|$ increases for all $t\in [t_2,t_e]$.
Similarly, if $-D_+<\rho(t_1)<- D_-$, it can be shown that $|\rho(t)|$ increases until it reaches the boundary $\rho=-D_+$ or $\rho=D_-$ in a finite time.
\end{proof}

\section{Persistence of the phase locking}
\label{sec5}
Consider the change of variables \eqref{subsast}, where $\rho_\ast(t)$, $\psi_\ast(t)$ is the solution of the truncated system  \eqref{trsys}, described in Lemma~\ref{Lem2}. Substituting \eqref{subsast} into \eqref{rhopsi}, we obtain
\begin{gather}\label{uvsde}
d\begin{pmatrix}
u \\ v
\end{pmatrix} = \begin{pmatrix} -\partial_v H(u,v,t) \\ 
\partial_u H(u,v,t)+G(u,v,t)\end{pmatrix}dt+\varepsilon {\bf D}(u,v,t)\, d{\bf w}(t),
\end{gather}
where ${\bf D}(u,v,t)\equiv \{d_{i,j}(u,v,t)\}_{2\times 2}$, $d_{i,j}(u,v,t) \equiv\sigma_{i,j}(\rho_\ast(t)+\mu^{\frac{n-1}{2}}(t)u,\psi_\ast(t)+v,S(t),t)$, the functions $H(u,v,t)$, $G(u,v,t)$ are defined by \eqref{HGfg}, \eqref{FGast2} and satisfy asymptotic estimates \eqref{HU2}. Note that $\partial_{u} H(0,0,t)\equiv  \partial_{v} H(0,0,t)\equiv G(0,0,t)\equiv 0$. It follows from \eqref{sigmaij} that
\begin{gather*}
d_{1,j}(u,v,t)=\mathcal O\left(\mu^{\frac{2p-n}{2}}(t)\right), \quad
d_{2,j}(u,v,t)=\mathcal O\left(\mu^{p}(t)\right)
\end{gather*}
as $t\to\infty$ uniformly for all $(u,v)\in\mathbb R^2$ such that $\Delta=\sqrt{u^2+v^2}\leq \Delta_0$ with some $\Delta_0>0$. If $\varepsilon=0$, system \eqref{uvsde} has an equilibrium at the origin. Let us prove the stability of the equilibrium with respect to white noise by constructing a suitable stochastic Lyapunov function~\cite[Ch. II, \S 5]{HJK67}. 

The generator of the process defined by \eqref{uvsde} has the  form $\mathfrak L\equiv  \mathfrak L_0+\varepsilon^2 \mathfrak L_1$, where
\begin{align*}
\mathfrak L_0:=& \partial_t-\partial_v H \partial_u+ (\partial_u H+G)\partial_v,\\
\mathfrak L_1:=& \frac{1}{2}\left((d_{1,1}^2+d_{1,2}^2)\partial_u^2+2(d_{1,1}d_{2,1}+d_{1,2}d_{2,2})\partial_u\partial_v+(d_{2,1}^2+d_{2,2}^2)\partial_v^2\right).
\end{align*}
Consider the function $\mathcal V_0(u,v,t)$ defined by \eqref{LFn}. It can easily be checked that 
\begin{align*}
 \mathcal V_0(u,v,t) & =  \frac{1}{2}(|\eta| u ^2 + |\xi| v^2)+ \mathcal O(\Delta^3)+ \mathcal O(\Delta^2)\mathcal O(\mu^z(t)), \\
 \mathfrak L_0 \mathcal V_0(u,v,t) & = \mu^{\frac{h}{2}}(t)(|\eta| u ^2 + |\xi| v^2) \left(\frac{\tilde \gamma_h}{2}  +\mathcal O(\Delta) +\mathcal O(\tilde \ell(t))+ \mathcal O(\mu^z(t))\right), \\
 \mathfrak L_1 \mathcal V_0(u,v,t) & = \mathcal O\left(\mu^{\frac{2p-n}{2}}(t)\right)
\end{align*}
as $t\to\infty$ and $\Delta\to 0$ with some $z\in (0,z_0)$, where $z_0$ and $\tilde \ell(t)$ are defined by \eqref{z0} and \eqref{ellchi}. Hence, there exist 
$t_1\geq t_0$, 
$\Delta_\ast>0$, 
$\mathcal V_+=\max\{|\xi|,|\eta|\}>0$, 
$\mathcal V_-=\min\{|\xi|,|\eta|\}/4>0$
$K_1=|\tilde\gamma_h |\mathcal V_->0$ and $K_2>0$ such that
\begin{gather}\label{V0ests}
\begin{split}
 \mathcal V_- \Delta^2\leq \mathcal V_0(u,v,t) &\leq \mathcal V_+ \Delta^2, \\
\mathfrak L \mathcal V_0(u,v,t)& \leq - K_1 \mu^{\frac{h}{2}}(t) \Delta^2+ \varepsilon^2 K_2   \mu^{\frac{2p-n}{2}}(t)
\end{split}
\end{gather}
for all $t\geq t_1$ and $\Delta\leq \Delta_\ast$.
Fix the parameters $\epsilon_1\in (0,\Delta_\ast)$, $\epsilon_2\in (0,1)$ and $t_\ast\geq \tau_1$. Consider the stochastic Lyapunov function candidate for system \eqref{uvsde} in the form
\begin{gather}\label{SLF1}
\mathcal V(u,v,t)\equiv \mathcal V_0(u,v,t)+\varepsilon^2 \mathcal V_1(t,\mathcal T),   
\end{gather}
where $\mathcal V_1(t,\mathcal T)\equiv 
 K_2 \left(\zeta_{2p-n}(t_\ast+\mathcal T)-\zeta_{2p-n}(t)\right)$
with some parameter $\mathcal T>0$. It can easily be checked that 
\begin{gather}
\label{V1est}
\begin{split}
\mathcal V(u,v,t) & \geq \mathcal V_0(u,v,t)\geq 0, \\ 
\mathfrak L \mathcal V(u,v,t) & \leq - K_1 \mu^{\frac{h}{2}}(t) \Delta^2\leq 0
\end{split}
\end{gather}
 for all $(u,v,t)\in \mathcal I(\Delta_\ast,t_\ast,\mathcal T):=\{(u,v,t)\in\mathbb R^3: \Delta\leq \Delta_\ast, 0\leq t-t_\ast\leq \mathcal T\}$. Let ${\bf y}(t)=(u(t),v(t))^T$ be a solution of system \eqref{uvsde} with initial data $|{\bf y}(t_\ast)|\leq  \delta_1$ and $\theta_{\epsilon}$ be the first exit time of $({\bf y}(t), t)$ from the domain $\mathcal I(\epsilon,t_\ast,\mathcal T)$ with some $\delta_1\in (0,\epsilon_1)$. Define the function $\theta_{\epsilon_1}(t)\equiv \min\{\theta_{\epsilon_1}, t\}$. Then, ${\bf y}(\theta_{\epsilon_1}(t))$ is the process stopped at the first exit time from the domain $\mathcal I(\epsilon_1,t_\ast,\mathcal T)$. It follows from \eqref{V1est} that $\mathcal V(u(\theta_{\epsilon_1}(t)), v(\theta_{\epsilon_1}(t)),\theta_{\epsilon_1}(t))$ is a nonnegative supermartingale (see, for example,~\cite[\S 5.2]{RH12}). In this case, the following estimates hold:
\begin{align*}
\mathbb P\left(\sup_{0\leq t-t_\ast\leq \mathcal T} |{\bf y}(t)|\geq \epsilon_1\right) & = 
\mathbb P\left(\sup_{t\geq t_\ast} |{\bf y}(\theta_{\epsilon_1}(t))|\geq \epsilon_1\right)  \\
& \leq \mathbb P\left(\sup_{t\geq t_\ast} \mathcal V(u(\theta_{\epsilon_1}(t)), v(\theta_{\epsilon_1}(t)),\theta_{\epsilon_1}(t))\geq \mathcal V_-\epsilon_1^2\right) \\
&\leq 
\frac{ \mathcal V(u(\theta_{\epsilon_1}(t_\ast)), v(\theta_{\epsilon_1}(t_\ast)),\theta_{\epsilon_1}(t_\ast))}{\mathcal V_-\epsilon_1^2}.
\end{align*}
The last estimate follows from Doob's inequality for supermartingales. It follows from \eqref{zetah}, \eqref{V0ests} \eqref{SLF1} that 
\begin{gather*}
\mathcal V(u(\theta_{\epsilon_1}(t_\ast)), v(\theta_{\epsilon_1}(t_\ast)),\theta_{\epsilon_1}(t_\ast))\leq \mathcal V_+ \delta_1^2+\varepsilon^{2(1-l)} \delta_2^{2l} \mathcal V_1(t_\ast,\mathcal T)
\end{gather*}
 for all $0<\varepsilon\leq \delta_2$ with some  $l\in(0,1]$ and $\delta_2>0$.

Let $\mu^{(2p-n)/2}(t)\in L_1(t_0,\infty)$. Then, there exists $\mathcal V_\ast>0$ such that $\mathcal V_1(t_\ast,\mathcal T)\leq \mathcal V_\ast$ for all $\mathcal T>0$.
Hence, taking $l=1$,
\begin{align*}
 \delta_1=\epsilon_1\sqrt{ \frac{\epsilon_2 \mathcal V_-}{2\mathcal V_+}}, \quad 
 \delta_2=  \epsilon_1\sqrt{ \frac{\epsilon_2 \mathcal V_-}{2\mathcal V_\ast}},
\quad 
\mathcal T=\infty,
\end{align*}
yields
\begin{gather*}
\mathbb P\left(\sup_{t\geq t_\ast} |{\bf y}(t)|\geq \epsilon_1\right) \leq \epsilon_2.
\end{gather*}

Now, let $\mu^{(2p-n)/2}(t)\not\in L_1(t_0,\infty)$. In this case, $\zeta_{2p-n}(t)\to\infty$ as $t\to\infty$. Since $\mu(t)>0$ for all $t\geq \tau_0$, there exists $\mathcal T_\varepsilon>0$ such that 
\begin{gather*}
\mathcal V_1(t_\ast,\mathcal T_\varepsilon)=K_2 \varepsilon^{-2(1-l)}
\end{gather*}
for all $\varepsilon\in (0,\delta_2)$ and $l \in (0,1)$. It follows easily that $\mathcal T_\varepsilon\to \infty$ as $\varepsilon \to 0$.
By taking
\begin{gather*}
 \delta_1=\epsilon_1\sqrt{ \frac{\epsilon_2 \mathcal V_-}{2\mathcal V_+}}, \quad 
 \delta_2= \left( \frac{\epsilon_1^2\epsilon_2 \mathcal V_-}{2 K_2}\right)^{\frac{1}{2l}},
\quad 
\mathcal T=\mathcal T_\varepsilon, 
\end{gather*}
we obtain
\begin{gather*}
\mathbb P\left(\sup_{0\leq t-t_\ast\leq \mathcal T_\varepsilon} |{\bf y}(t)|\geq \epsilon_1\right) \leq \epsilon_2.
\end{gather*}

Thus, returning to the variables $\rho(t)$, $\psi(t)$, we get \eqref{rpst}.
 This completes the proof of Theorem~\ref{Th2}.

\section{Examples}\label{sex}

\subsection{Example 1}
Consider the system
\begin{gather}\label{Ex1}
\begin{split}
  dr = \left(\mu^2(t) a_1(r,\varphi,S(t)) +\mu^{2p}(t) c_1(r,\varphi,S(t))\right) dt + \varepsilon \mu^p(t) b_1(r,\varphi,S(t))  dw_1(t) \\
 d\varphi =\left(\nu(r)+\mu^2(t) a_2(r,\varphi,S(t)) +\mu^{2p}(t) c_2(r,\varphi,S(t))\right)dt+\varepsilon \mu^p(t) b_2(r,\varphi,S(t))  dw_1(t)
\end{split}
\end{gather}
where $\nu(r)\equiv 1-\vartheta r^2$, $S(t) \equiv s_0 t$, $\mu(t)\equiv t^{-1/2}\log t$,
\begin{align*}
\begin{pmatrix} a_1(r,\varphi,S)\\ a_2(r,\varphi,S)\end{pmatrix} & 
	\equiv (\mathcal Q(S) r \sin\varphi - \mathcal Z(S))\begin{pmatrix} \sin \varphi\\ r^{-1 }\cos \varphi \end{pmatrix}, \\
\begin{pmatrix} b_1(r,\varphi,S)\\ b_2(r,\varphi,S) \end{pmatrix} & 
	\equiv - \mathcal B(S) \begin{pmatrix} \sin \varphi\\ r^{-1 }\cos \varphi \end{pmatrix}, \\
\begin{pmatrix} c_1(r,\varphi,S)\\ c_2(r,\varphi,S) \end{pmatrix} & 
	\equiv \frac{(\varepsilon  \mathcal B (S))^2}{2r} \begin{pmatrix} \cos^2 \varphi\\ -r^{-1 }\sin 2 \varphi \end{pmatrix},
\end{align*}
$\mathcal Q(S)\equiv \mathcal Q_0+\mathcal Q_1\sin S$, $\mathcal Z(S)\equiv \mathcal Z_0+\mathcal Z_1\sin S$, $\mathcal B(S)\equiv \mathcal B_0+\mathcal B_1 \sin S$,
with constant parameters $s_0>0$, $\vartheta>0$, $\mathcal Q_k$, $\mathcal Z_k$ and $\mathcal B_k$. We see that system \eqref{Ex1} has the form \eqref{PS} with $\mathcal R=\vartheta^{-1/2}$, $m=4$, $\chi_m=0$,
\begin{align*}
{\bf a}(r,\varphi,S,t)\equiv &\mu^2(t)\begin{pmatrix}  a_1(r,\varphi,S)\\ a_2(r,\varphi,S) \end{pmatrix} + \mu^{2p}(t)\begin{pmatrix}  c_1(r,\varphi,S)\\ c_2(r,\varphi,S) \end{pmatrix}, \\ 
{\bf A}(r,\varphi,S,t) \equiv &\mu^p(t)\begin{pmatrix} b_1(r,\varphi,S) & 0 \\ b_2(r,\varphi,S) & 0 \end{pmatrix}.
\end{align*}
In the Cartesian coordinates $x_1=r\cos \varphi$, $x_2=-r\sin\varphi$ this system takes the form
\begin{align*}
&dx=(1-\vartheta (x_1^2+x_2^2)) x_2\, dt, \\ 
&dy=\left(-(1-\vartheta (x_1^2+x_2^2))x_1 + \mu^2(t)\mathcal G(x_1,x_2,S(t)) \right) dt+ \varepsilon \mu^{p}(t) \mathcal B(S(t))\, dw_1(t),
\end{align*}
where $ \mathcal G(x_1,x_2,S)\equiv \mathcal Z(S)+\mathcal Q(S) x_2$.

Let $s_0=1/2$. Then, there exist $\kappa=\varkappa=1$, $r_0=(2\vartheta)^{-1/2}$ such that the resonance condition \eqref{rc} holds with $\eta=-\sqrt{2\vartheta}<0$. 

1. Consider first the case $p=1$. It can easily be checked that the change of variables described in Theorem~\ref{Th1} with $N=4$ transforms the system to \eqref{rhopsi} with $n=2$, 
\begin{gather}\begin{split}\label{Ex1LO}
&\Lambda(\rho,\psi,t)\equiv \mu^{\frac 32}(t)\Lambda_3(\rho,\psi)+\mu^{2}(t)\Lambda_4(\rho,\psi)+o\left(\mu^{2}(t)\right), \\
&\Omega(\rho,\psi,t)\equiv \mu^{\frac{1}{2}}(t) \eta \rho - \mu(t) \vartheta \rho^2+\mu^{2}(t)\Omega_4(\rho,\psi)+o\left(\mu^{2}(t)\right)
\end{split}
\end{gather}
as $t\to\infty$ uniformly for all $|\rho|<\infty$, $(\psi,S)\in\mathbb R^2$, where
\begin{align*}
&\Lambda_3(\rho,\psi)\equiv \frac 12 \left(\frac{\mathcal Q_0}{\sqrt{2\vartheta}}-\mathcal Z_1 \cos\psi+\frac{\sqrt{2\vartheta} \varepsilon^2 }{8} \left(4\mathcal B_0^2+2\mathcal B_1^2-\mathcal B_1^2\cos 2\psi\right)\right), \\ 
&\Lambda_4(\rho,\psi)\equiv \frac{\rho}{2}\left(\mathcal Q_0-\frac{ \vartheta \varepsilon^2 }{4} \left(4\mathcal B_0^2+2\mathcal B_1^2-\mathcal B_1^2\cos 2\psi\right)\right),\\
&\Omega_4(\rho,\psi)\equiv \frac{1}{2}\left(\sqrt{2\vartheta} \mathcal Z_1 \sin\psi + \frac{ \vartheta \varepsilon^2  \mathcal B_1^2 \sin 2\psi}{2} \right).
\end{align*}

Let $\mathcal Z_1=0$, $\mathcal B_1\neq 0$ and 
\begin{gather}\label{exest1}
- \mathcal B_0^2-\frac{3\mathcal B_1^2}{4}<\frac{\mathcal Q_0}{\vartheta \varepsilon^2}<- \mathcal B_0^2-\frac{\mathcal B_1^2}{4}.
\end{gather}
Then, assumption \eqref{aszero} holds with
\begin{gather*}
\psi_0\in\left\{\pm \frac{\theta_0}{2} + \pi k, \quad k\in\mathbb Z\right\}, \quad \xi=\pm\frac{\sqrt{2\vartheta}\varepsilon^2 \mathcal B_1^2 }{8}\sin \theta_0, \\ \theta_0=\arccos \left(2+ \frac{4}{\mathcal B_1^2}\left(\mathcal B_0^2+\frac{\mathcal Q_0}{\vartheta \varepsilon^2}\right)\right).
\end{gather*}
Since $\eta<0$, we see that the equilibria $(0,-\theta_0/2+\pi k)$, corresponding to $\xi<0$, are unstable in the limiting system. Hence, the associated regime is not realized in the full system. Note that assumption \eqref{asst} holds with $h=4$ and $\tilde\gamma_h=\gamma_h=3\mathcal Q_0+\vartheta \varepsilon^2 (2\mathcal B_0^2+ \mathcal B_1^2)$. It follows from Lemma~\ref{Lem2} and Theorem~\ref{Th2} that if $\mathcal Q_0<-\vartheta \varepsilon^2 (2\mathcal B_0^2+ \mathcal B_1^2)/3$ and inequality \eqref{exest1} is satisfied, then a phase locking regime occurs with $r(t)\approx r_0$ and $\varphi(t)\approx S(t)+(\theta_0/2)({\hbox{\rm mod}} \,\pi)$ (see Fig.~\ref{FigEx11}). Moreover, it follows from \eqref{Tequat} that for all $l\in (0,1)$ this regime is stochastically stable on an asymptotically large time interval with $\mathcal T_\varepsilon=\varepsilon^{-2(1-l)}$. If \eqref{exest1} is not satisfied, then it follows from Lemma~\ref{Lem3} that the asymptotic regime with $r(t)\approx r_0$ and $\varphi(t)\approx S(t)$ does not arise.
\begin{figure}
\centering
{
   \includegraphics[width=0.4\linewidth]{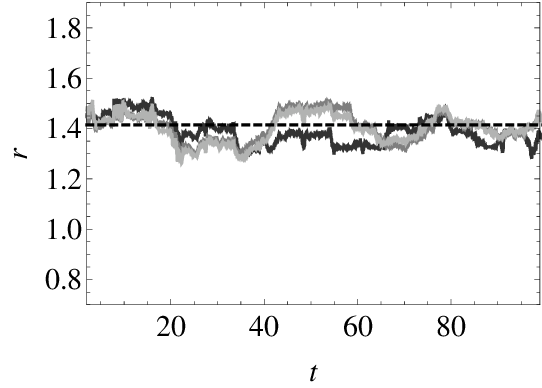}
}
\hspace{1ex}
{
   	\includegraphics[width=0.4\linewidth]{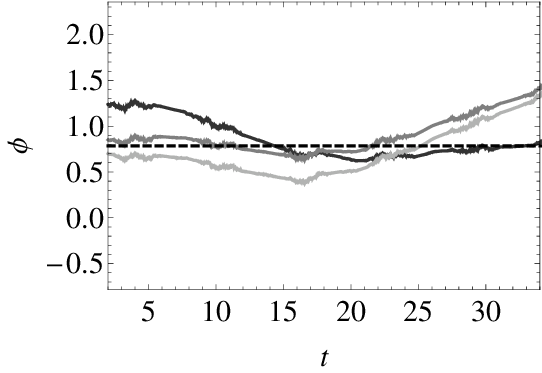}
}
\caption{\small The evolution of $r(t)$ and $\phi(t)\equiv \varphi(t)-S(t)$  for sample paths of solutions to system \eqref{Ex1} with $s_0=1/2$, $p=1$, $\vartheta=1/4$, $\mathcal Z_0=\mathcal Z_1=\mathcal B_0=\mathcal Q_1=0$, $\mathcal B_1=1$, $\mathcal  Q_0=-\varepsilon^2/8$, $\varepsilon=0.1$ and different values of initial data. The dashed curves correspond to $r(t)\equiv r_0$ and $\phi(t)\equiv \psi_0$, where $r_0=\sqrt 2$ and $\psi_0=\pi/4$.} \label{FigEx11}
\end{figure}

Let $\mathcal Z_1\neq 0$, $\mathcal B_1=0$ and 
\begin{gather}\label{exest2}
 \left|\frac{\mathcal Q_0+\vartheta \varepsilon^2\mathcal B_0^2}{\mathcal Z_1}\right|<\sqrt{2\vartheta}.
\end{gather}
Then, assumption \eqref{aszero} is satisfied with
\begin{gather*}
\psi_0\in\left\{\pm  \theta_0  + 2\pi k, \quad k\in\mathbb Z\right\}, \quad \xi=\pm\frac{\mathcal Z_1\sin \theta_0}{2}, \\ \theta_0=\arccos \left( \frac{\mathcal Q_0+\vartheta \varepsilon^2\mathcal B_0^2}{\mathcal Z_1 \sqrt{2\vartheta}}\right).
\end{gather*}
It can easily be checked that assumption \eqref{asst} holds with $h=4$ and $\gamma_h=\mathcal Q_0$. Hence, applying Theorem~\ref{Th2} shows that if $\pm \mathcal Z_1>0$, $\mathcal Q_0<0$ and inequality \eqref{exest2} is satisfied, then for all $l\in(0,1)$ a phase locking regime with $r(t)\approx r_0$ and $\varphi(t)\approx S(t)\pm\theta_0 ({\hbox{\rm mod}}\, 2\pi)$ occurs in system \eqref{Ex1} and is stochastically stable on asymptotically long time interval with $\mathcal T_\varepsilon=\varepsilon^{-2(1-l)}$ (see Fig.~\ref{FigEx12}).

\begin{figure}
\centering
{
   \includegraphics[width=0.4\linewidth]{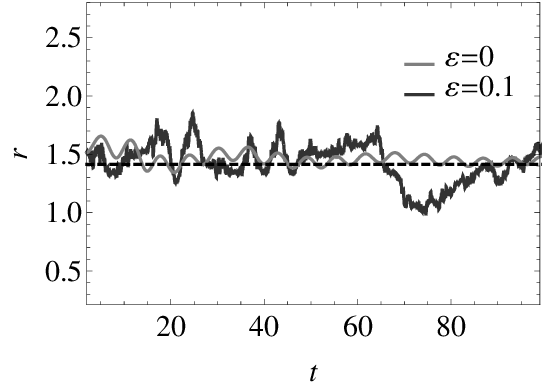}
}
\hspace{1ex}
{
   	\includegraphics[width=0.4\linewidth]{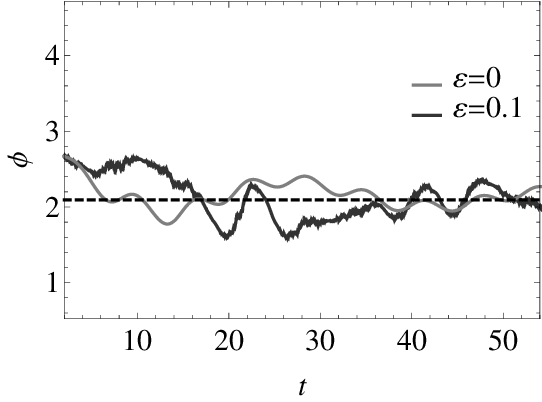}
}
\caption{\small The evolution of $r(t)$ and $\phi(t)\equiv \varphi(t)-S(t)$  for sample paths of solutions to system \eqref{Ex1} with $s_0=1/2$, $p=1$, $\vartheta=1/4$, $\mathcal Z_0=\mathcal B_1=\mathcal Q_1=0$, $\mathcal B_0=2$, $\mathcal Z_1=1/\sqrt{8}$, $\mathcal  Q_0=-0.126$ and different values of $\varepsilon$. The dashed curves correspond to $r(t)\equiv r_0$ and $\phi(t)\equiv \psi_0$, where $r_0=\sqrt 2$ and $\psi_0=2\pi/3$.} \label{FigEx12}
\end{figure}

2. Consider now the case $p=2$. The change of variables described in Theorem~\ref{Th1} with $N=4$ transforms the system to \eqref{rhopsi} with $n=2$, where $\Lambda(\rho,\psi,t)$ and $\Omega(\rho,\psi,t)$ satisfy \eqref{Ex1LO} with 
\begin{align*}
\Lambda_3(\rho,\psi)\equiv &\frac 12 \left(\frac{\mathcal Q_0}{\sqrt{2\vartheta}}-\mathcal Z_1 \cos\psi\right), \\
\Lambda_4(\rho,\psi)\equiv &\frac{\rho \mathcal Q_0}{2},\\
\Omega_4(\rho,\psi)\equiv &\frac{\sqrt{2\vartheta} \mathcal Z_1 \sin\psi}{2}.
\end{align*}

Let $\mathcal Z_1\neq 0$ and $|\mathcal Q_0/\mathcal Z_1|<\sqrt{2\vartheta}$. 
Then, assumption \eqref{aszero} is satisfied with
\begin{gather*}
\psi_0\in\left\{\pm  \theta_0  + 2\pi k, \quad k\in\mathbb Z\right\}, \quad \xi=\pm\frac{\mathcal Z_1\sin \theta_0}{2}, \\ \theta_0=\arccos \left( \frac{\mathcal Q_0}{\mathcal Z_1 \sqrt{2\vartheta}}\right),
\end{gather*}
and assumption \eqref{asst} holds with $h=4$ and $\tilde \gamma_h=\gamma_h=\mathcal Q_0$. Note that if $\pm \mathcal Z_1<0$, then the equilibria $(0,\pm\theta_0+2\pi k)$ are unstable in the corresponding limiting system. By applying Theorem~\ref{Th2}, we see that if $\pm \mathcal Z_1>0$ and $-|\mathcal Z_1|\sqrt{2\vartheta}<\mathcal Q_0<0$, then for all $l\in (0,1)$ a phase locking regime with $r(t)\approx r_0$ and $\varphi(t)\approx S(t)\pm\theta_0 ({\hbox{\rm mod}}\, 2\pi)$ persists in the perturbed stochastic system \eqref{Ex1} at least over asymptotically long time interval with $\mathcal T_\varepsilon=\mathcal O(\varepsilon^{-4(1-l)})$ as $\varepsilon \to 0$. If $|\mathcal Q_0/\mathcal Z_1|>\sqrt{2\vartheta}$, then it follows from Lemma~\ref{Lem3} that the asymptotic regime with $r(t)\approx r_0$ and $\varphi(t)\approx S(t)$ does not occur  (see Fig.~\ref{FigEx13}).

\begin{figure}
\centering
{
    \includegraphics[width=0.4\linewidth]{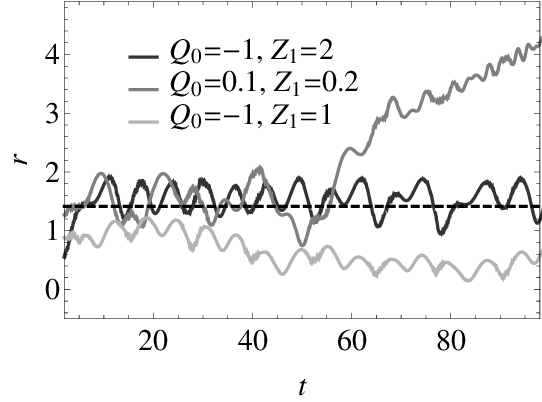}
}
\hspace{1ex}
{
    	\includegraphics[width=0.4\linewidth]{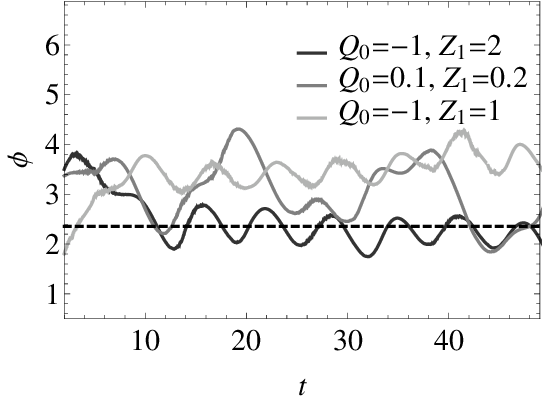}
}
\caption{\small The evolution of $r(t)$ and $\phi(t)\equiv \varphi(t)-S(t)$  for sample paths of solutions to system \eqref{Ex1} with $s_0=1/2$, $p=2$, $\vartheta=1/4$, $\mathcal Z_0=\mathcal  Q_1=0$, $\mathcal B_0=\mathcal B_1=1$, $\varepsilon=0.1$ and different values of the parameters $\mathcal Q_0$ and $\mathcal Z_1$. The dashed curves correspond to $r(t)\equiv r_0$ and $\phi(t)\equiv \psi_0$, where $r_0=\sqrt 2$ and $\psi_0=3\pi/4$.} \label{FigEx13}
\end{figure}

\subsection{Example 2} 
Consider again equation \eqref{Ex0}. It follows from Section~\ref{sec1} that this system correspond to \eqref{PS} with $\mu(t)=t^{-1/4}$, $s_0=3/2$, and $\nu(r)$, ${\bf a}(r,\varphi,S,t)=(a_1(r,\varphi,S,t),a_2(r,\varphi,S,t))^T$ and ${\bf A}(r,\varphi,S,t)=\{\alpha_{i,j}(r,\varphi,S,t)\}_{2\times 2}$ defined by \eqref{omegaeq} and \eqref{aAEx0}. In this case, the condition \eqref{mucond} is satisfied with $m=4$ and $\chi_m=-1/4$. Note that $0<\nu(r)<1$ for all $0<|r|<(2\vartheta)^{-1/2}$ and $\nu(r)=1-3 \vartheta r^2/8-35\vartheta^2 r^4/256+\mathcal O(\vartheta^4)$ as $\vartheta\to 0$. Hence, there exist $\kappa, \varkappa\in\mathbb Z_+$ and $0<r_0<(2\vartheta)^{-1/2}$ such that the condition \eqref{rc} holds with $\eta<0$. Note that the root of the equation $\nu(r_0)=\kappa s_0/\varkappa$ can be found numerically. In particular, if $\vartheta=2^{-5}$, $\kappa=1$ and $\varkappa=2$, then $r_0\approx 3.6$ (see Fig.~\ref{fignu}). 

\begin{figure}
\centering
{
   \includegraphics[width=0.4\linewidth]{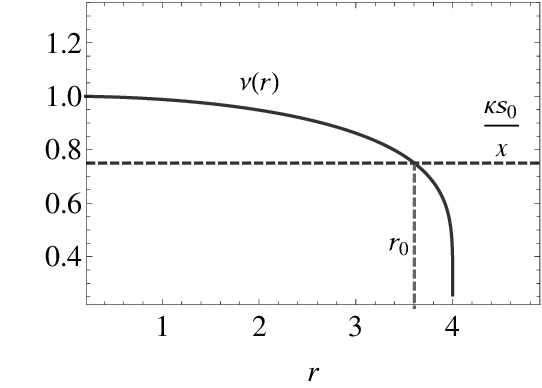}
}
\caption{\small Finding $r_0$, when $\vartheta=2^{-5}$, $\kappa=1$ and $\varkappa=2$.} \label{fignu}
\end{figure} 

1. Consider first the case $n=2$, $p=1$. Let $\kappa=1$ and $\varkappa=2$. Then, the transformation described in Theorem~\ref{Th1} with $N=4$ reduces system to \eqref{rhopsi} with 
\begin{gather}\begin{split}\label{Ex2LO}
&\Lambda(\rho,\psi,t)\equiv t^{-\frac{3}{8}}\Lambda_3(\rho,\psi)+t^{-\frac{1}{2}}\Lambda_4(\rho,\psi)+\mathcal O(t^{-\frac{5}{8}}), \\
&\Omega(\rho,\psi,t)\equiv t^{-\frac{1}{8}}\eta \rho +\sum_{k=2}^4 t^{-\frac{k}{8}}\Omega_k(\rho,\psi)+\mathcal O(t^{-\frac{5}{8}})
\end{split}
\end{gather}
as $t\to\infty$ uniformly for all $|\rho|<\infty$, $(\psi,S)\in\mathbb R^2$, where
\begin{align*}
&\Lambda_3(\rho,\psi)= \frac{r_0}{4} \left(2\mathcal Q_0+\frac{\varepsilon^2 \mathcal B_0^2}{r_0^2}-\mathcal Z_1 \cos(2\psi+\theta_1)\right)+\mathcal O(\vartheta), \\ 
&\Lambda_4(\rho,\psi)= \frac{\rho}{4}\left(2\mathcal Q_0-\frac{\varepsilon^2 \mathcal B_0^2}{r_0^2}-\mathcal Z_1 \cos(2\psi+\theta_1)\right)+\mathcal O(\vartheta),\\
& \Omega_2(\rho,\psi)\equiv \frac{\partial_r^2 \nu(r_0)\rho^2}{2},\\
& \Omega_3(\rho,\psi)\equiv \frac{\partial_r^3 \nu(r_0)\rho^3}{6},\\
&\Omega_4(\rho,\psi)= \frac{\partial_r^4 \nu(r_0)\rho^4}{24}+ \frac{3}{16}\left(-2\mathcal P_0+\mathcal Z_1 \sin(2\psi+\theta_1)  \right)+\mathcal O(\vartheta)
\end{align*}
 as $\vartheta\to 0$, $\mathcal Z_1=\sqrt{\mathcal P_1^2+\mathcal Q_1^2}$, $\theta_1=\arccos(\mathcal P_1/\mathcal Z_1)$. 
It can easily be checked that if $\mathcal Z_1\neq 0$ and the inequalities 
\begin{gather}\label{ex2est1}
- \frac{1}{2}\left(\mathcal Z_1+\frac{\varepsilon^2\mathcal B_0^2}{ r_0^2}\right)<\mathcal Q_0< \frac{1}{2}\left(\mathcal Z_1-\frac{\varepsilon^2\mathcal B_0^2}{ r_0^2}\right)
\end{gather}
are satisfied, then assumption \eqref{aszero} holds with
\begin{gather*}
 \psi_0\in\left\{\frac{\pm \theta_0-\theta_1}{2} + \pi k+\mathcal O(\vartheta), \quad k\in\mathbb Z\right\}, \quad
\xi=\pm\frac{r_0 \mathcal Z_1}{2}\sin \theta_0+\mathcal O(\vartheta),\\
  \theta_0=\arccos \left(\frac{1}{\mathcal Z_1}\left(2\mathcal Q_0+ \frac{\varepsilon^2 \mathcal B_0^2}{r_0^2}\right)\right).
\end{gather*}
From the stability analysis of the limiting system it follows that if $\xi<0$, then the regimes corresponding to the equilibria $(0,-(\theta_0+\theta_1)/2+\pi k)$ are not realized. We see that assumption \eqref{asst} holds with $h=4$ and 
\begin{gather*}
\gamma_h=\frac{3}{4}\left(\mathcal Q_0-\frac{\varepsilon^2 \mathcal B_0^2}{6 r_0^2}\right)+\mathcal O(\vartheta), \quad \vartheta\to 0.
\end{gather*}
From \eqref{zetah} it follows  that $\tilde \gamma_h=\gamma_h$. Thus, by Lemma~\ref{Lem2} and Theorem~\ref{Th2}, we see that if 
\begin{gather}\label{Qoineq}
\mathcal Q_0<\frac{\varepsilon^2 \mathcal B_0^2}{6 r_0^2}
\end{gather}
and \eqref{ex2est1} holds, then for small enough $\vartheta\ll \varepsilon^2$ a phase locking regime occurs with $r(t)\approx r_0$ and $\varphi(t)\approx S(t)/2+((\theta_0-\theta_1)/2)({\hbox{\rm mod}} \,\pi)$  (see Fig.~\ref{FigEx2}). Moreover, for all $l\in (0,1)$ this regime is stochastically stable on an asymptotically large time interval with $\mathcal T_\varepsilon=\varepsilon^{-2(1-l)}$.
Finally, from Lemma~\ref{Lem3} it follows that if \eqref{ex2est1} is not satisfied, then the asymptotic regime with $r(t)\approx r_0$ and $\varphi(t)\approx S(t)/2$ does not arise.

Note that if $\varepsilon = 0$, then the noise terms disappear in the perturbed system \eqref{Ex0}. It follows from \cite{OS25DCDS} that in this case a stable phase locking regime occurs if $\mathcal Z_1\neq 0$ and $-\mathcal Z_1/2<\mathcal Q_0<0$. Comparing these inequalities with \eqref{ex2est1} and \eqref{Qoineq}, we see that stochastic perturbations may expand the stability domain in the parameter space. However, in this case the length of the stability interval is finite, but asymptotically large. 

\begin{figure}
\centering
{
    \includegraphics[width=0.4\linewidth]{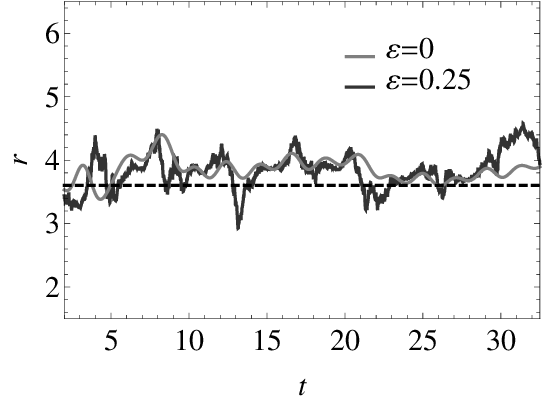}
}
\hspace{1ex}
{
    	\includegraphics[width=0.4\linewidth]{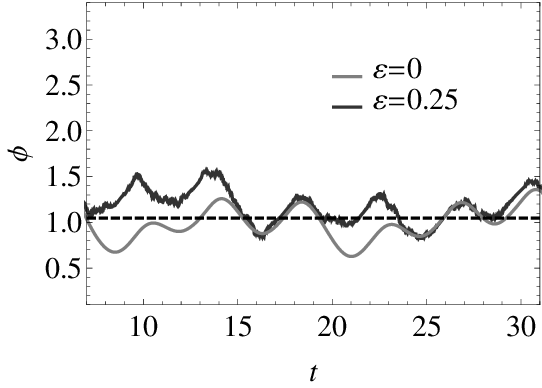}
}
\caption{\small The evolution of $r(t)\equiv \sqrt{2U(x(t))+y^2(t)}$ and $\phi(t)\equiv \varphi(t)-S(t)/2$  for sample paths of solutions to system \eqref{Ex0} with $s_0=3/2$, $n=2$, $p=1$, $\vartheta=2^{-5}$, $\mathcal P_0=\mathcal  Q_1=0$, $\mathcal P_1=1$, $\mathcal Q_0=-0.25$, $\mathcal B_0=3.6$, $\mathcal B_1=0$. The dashed curves correspond to $r(t)\equiv r_0$ and $\phi(t)\equiv \psi_0$, where $r_0\approx 3.6$ and $\psi_0 \approx 1.01$.} \label{FigEx2}
\end{figure}

2. Consider now the case $n=p=2$. If $\kappa=1$ and $\varkappa=2$, then the averaged system takes the form \eqref{rhopsi} with $\Lambda(\rho,\psi,t)$ and $\Omega(\rho,\psi,t)$ satisfying \eqref{Ex2LO}, where
\begin{align*}
&\Lambda_3(\rho,\psi)= \frac{r_0}{4} \left(2\mathcal Q_0-\mathcal Z_1 \cos(2\psi+\theta_1)\right)+\mathcal O(\vartheta), \\ 
&\Lambda_4(\rho,\psi)= \frac{\rho}{4}\left(2\mathcal Q_0-\mathcal Z_1 \cos(2\psi+\theta_1)\right)+\mathcal O(\vartheta),\\
& \Omega_2(\rho,\psi)\equiv \frac{\partial_r^2 \nu(r_0)\rho^2}{2},\\
& \Omega_3(\rho,\psi)\equiv \frac{\partial_r^3 \nu(r_0)\rho^3}{6},\\
&\Omega_4(\rho,\psi)= \frac{\partial_r^4 \nu(r_0)\rho^4}{24}+ \frac{3}{16}\left(-2\mathcal P_0+\mathcal Z_1 \sin(2\psi+\theta_1)  \right)+\mathcal O(\vartheta)
\end{align*}
as $\vartheta\to 0$, $\mathcal Z_1=\sqrt{\mathcal P_1^2+\mathcal Q_1^2}$ and $\theta_1=\arccos(\mathcal P_1/\mathcal Z_1)$. 
If $\mathcal Z_1\neq 0$ and $|\mathcal Q_0|< \mathcal Z_1/2$, then assumption \eqref{aszero} holds with
\begin{gather*}
 \psi_0\in\left\{\frac{\pm \theta_0-\theta_1}{2} + \pi k+\mathcal O(\vartheta), \quad k\in\mathbb Z\right\}, \quad
\xi=\pm\frac{r_0 \mathcal Z_1}{2}\sin \theta_0+\mathcal O(\vartheta),\\
  \theta_0=\arccos \frac{2\mathcal Q_0}{\mathcal Z_1}.
\end{gather*}
Since $\eta<0$, we see that the equilibria $(0,-(\theta_0+\theta_1)/2+\pi k)$, corresponding to $\xi<0$, are unstable in the limiting system. It can easily be checked that assumption \eqref{asst} holds with $h=4$ and $\tilde \gamma_h=\gamma_h=3\mathcal Q_0/4$. Thus, by Lemma~\ref{Lem2} and Theorem~\ref{Th2}, we see that if $\mathcal Q_0<0$, then for all $l\in(0,1)$ and for small enough $\vartheta\ll 1$ a phase locking regime occurs with $r(t)\approx r_0$ and $\varphi(t)\approx S(t)/2+((\theta_0-\theta_1)/2)({\hbox{\rm mod}} \,\pi)$, and is stochastically stable on asymptotically long time interval with $\mathcal T_\varepsilon=\mathcal O(\varepsilon^{-8(1-l)/3})$ as $\varepsilon \to 0$  (see Fig.~\ref{FigEx22}). Note that, unlike the previous case, stochastic perturbations in this case do not lead to a change in the stability boundaries.

\begin{figure}
\centering
{
   \includegraphics[width=0.4\linewidth]{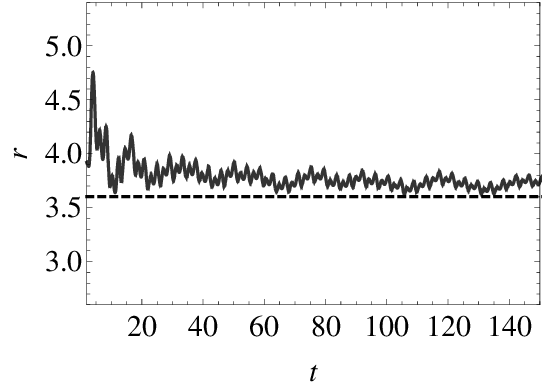}
}
\hspace{1ex}
{
    	\includegraphics[width=0.4\linewidth]{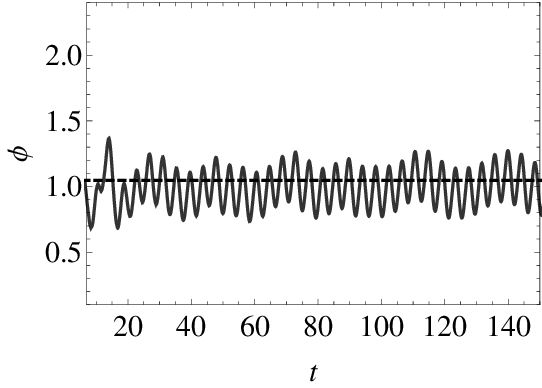}
}
\caption{\small The evolution of $r(t)\equiv \sqrt{2U(x(t))+y^2(t)}$ and $\phi(t)\equiv \varphi(t)-S(t)/2$  for sample paths of solutions to system \eqref{Ex0} with $s_0=3/2$, $n= p=2$, $\vartheta=2^{-5}$, $\varepsilon=0.125$, $\mathcal P_0=\mathcal  Q_1=0$, $\mathcal P_1=1$, $\mathcal Q_0=-0.25$, $\mathcal B_0=1$, $\mathcal B_1=0$. The dashed curves correspond to $r(t)\equiv r_0$ and $\phi(t)\equiv \psi_0$, where $r_0\approx 3.6$ and $\psi_0 =\pi/3$.} \label{FigEx22}
\end{figure}

\section{Conclusion}

Thus, the combined influence of stochastic perturbations and decaying driving with asymptotically constant frequency on the non-isochronous systems far from the equilibrium have been investigated. In particular, the persistence of phase locking phenomena and the appearance of solutions with resonant amplitude values in the presence of noise have been studied. 

By averaging the drift terms of equations with respect to the phase of perturbations, we have derived a model truncated system \eqref{trsys}, describing possible asymptotic regimes --- phase locking and phase drifting. The resonant solutions arise in the phase locking mode. We have described the conditions that guarantee the persistent of such solutions in the full stochastic system on infinite or asymptotically large time intervals depending on the intensity of the noise. The examples presented illustrate how the developed theory can be applied. In particular, the resonant capture in the stochastically perturbed Duffing oscillator have been discussed. We have shown that in some cases, stochastic perturbations may expand the stability domain in the parameter space and can be used for stabilization.

Note that the proposed study have been focused on perturbations with decaying intensity characterized by the function $\mu(t)$ that satisfies specific conditions \eqref{mucond}. We have shown that power and power-logarithmic functions are suitable. It is easy to check that, for instance, decaying exponential functions do not satisfy these conditions. In such cases, the developed theory cannot be applied directly. Note also that additional difficulties may arise due to internal resonances and the problem of small denominators when considering perturbations of multidimensional systems. These cases deserve special attention and will be considered elsewhere.

\section*{Acknowledgments}
The work is supported by the Russian Science Foundation (project No. 19-71-30002).

\end{document}

}
\end{document}